\newtheorem{corollary}{Corollary}
\newtheorem{proposition}{Proposition}
\begin{document}

\title[Bifurcations in a Saltzman-Maasch model]{Hopf-Bautin and homoclinic bifurcations in a  Saltzman-Maasch model with cubic feedback}

\author[M. P. Garc\'{\i}a-Rivera et al.]{Marco Polo Garc\'{\i}a-Rivera, Martha Alvarez-Ram\'{\i}rez}
\address{Departamento de Matem\'aticas, UAM--Iztapalapa,\\
09310 Iztapalapa, Mexico City, Mexico}
\email{mpgr86@gmail.com, mar@xanum.uam.mx}

\author[]{Hildeberto Jard\'on-Kojakhmetov}
\address{Johann Bernoulli Institute for Mathematics and Computer Science,\\ 
University of Groningen, P.O. Box 407, 9700 AK\\ Groningen, The Netherlands}
\email{h.jardon.kojakhmetov@rug.nl}

\maketitle

\begin{abstract}
This paper investigates a deterministic variant of the Saltzman-Maasch model for Pleistocene glacial cycles, formulated as a three-dimensional dynamical system with cubic feedback in the atmospheric carbon dioxide equation. After reducing the model to a planar system on a critical manifold, we perform a detailed bifurcation analysis and analytically identify both Hopf and Bautin (generalized Hopf) bifurcations, which govern the emergence of stable and unstable limit cycles. To analyze global transitions, we perform a rescaling of time and variables to derive a leading-order Hamiltonian system. This reduction enables the explicit construction of homoclinic orbits and the application of Melnikov’s method to assess their persistence under perturbations. 
The analytical predictions are further validated through numerical continuation and simulations, providing a rigorous foundation for previously reported numerical observations and establishing, in particular, the analytical existence of Bautin bifurcations, homoclinic connections via Melnikov analysis, and a systematic slow--fast geometric reduction of the model.
\end{abstract}

\keywords{Hopf-Bautin bifurcation; Takens-Bogdanov bifurcation; Homoclinic bifurcation; Slow-fast systems; Melnikov method}

\section{Introduction}
Pleistocene climate variability is characterized by alternating glacial and interglacial periods, commonly attributed to variations in Earth's orbital parameters (Milankovitch forcing). Although these forcings are quasiperiodic and relatively weak, the climate system exhibits nonlinear and sometimes abrupt responses, suggesting the presence of internal amplification mechanisms. 

Conceptual climate models provide a useful framework for investigating such
mechanisms. Among them, the Saltzman-Maasch (SM) model
\cite{Saltzman_Maasch_1990, Maasch1990} describes the coupled evolution of
global ice mass, atmospheric CO$_2$, and deep-ocean circulation through a
low-dimensional system of ordinary differential equations. Recent work such as \cite{Shen2026b}, has emphasized the role of nonlinear
feedbacks, bistability, and tipping phenomena in such models; in particular,
Shen considers a conceptual energy balance model in which a cubic nonlinear
term, arising from ice-albedo feedback, introduces an effective negative
feedback that limits linear instability and produces a double-well potential
structure supporting multiple stable climatic regimes and abrupt transitions
between them. Here, we analyze a Saltzman-Maasch type model with cubic
feedback from the perspective of local and global bifurcation theory,
establishing analytically the occurrence of Hopf, generalized Hopf (Bautin),
and homoclinic bifurcations.
In nondimensional form, the original model \cite{Saltzman_Maasch_1990, Maasch1990} reads as:
\begin{equation}
\label{eq_hans}
\begin{array}{l}
\dot{x} = -x - y,\\
\dot{y} = ry -pz  + sz^2 - yz^2,\\
\dot{z} = -q(x + z),
\end{array}
\end{equation}
where $x, y,$ and $z$ represent anomalies in global ice mass, atmospheric CO$_2$ concentration, and deep-ocean circulation strength, respectively, and $p, q, r, s > 0$ are dimensionless parameters encoding effective physical rates. This model has been widely used to study glacial--interglacial dynamics and the role of internal feedback mechanisms; in particular, the nonlinear terms in the CO$_2$ equation are central to shaping the oscillatory behavior of the system.

The parameter $q$ represents the ratio of characteristic time scales between the deep-ocean circulation and the slower climatic variables (ice mass and atmospheric CO$_2$). The regime $q \gg 1$ therefore corresponds to a separation of time scales in which the ocean adjusts much faster than the other components, naturally inducing a slow--fast dynamical structure. System~\eqref{eq_hans} has been extensively analyzed in the literature; notably, Engler et al. \cite{Engler2017} studied this model in the singular perturbation regime $q \gg 1$, showing that the dynamics are organized by invariant slow or center manifolds and structured by Bogdanov--Takens bifurcations.

Motivated by nonlinear feedback mechanisms in the carbon cycle, we consider a modification of system~\eqref{eq_hans} in which the atmospheric CO$_2$ equation includes additional nonlinear self-interaction terms. This leads to the cubic model:
\begin{equation}
\label{MSN}
\begin{array}{l}
\dot{x} = -x - y,\\
\dot{y} = ry -pz  - sy^2 - y^3,\\
\dot{z} = -q(x + z).
\end{array}
\end{equation}
The cubic nonlinearity in the CO$_2$ equation (the $y$-component) can be interpreted as a low-order approximation of nonlinear atmosphere--ocean carbon exchange processes. In conceptual climate models, polynomial feedback terms are commonly used to represent nonlinear saturation and self-limiting effects in the carbon cycle. In particular, cubic terms provide the simplest extension beyond quadratic interactions, allowing for richer dynamical behavior while preserving a minimal model structure.
As in \cite{Nolet2017}, we consider the cubic model~\eqref{MSN} in the regime $q \gg 1$, so that it also exhibits a natural slow--fast structure.

While Engler et al. \cite{Engler2017} provided a rigorous geometric singular perturbation analysis for the  SM model \eqref{eq_hans}, establishing a solid analytical foundation for its slow--fast dynamics, a similar treatment for the cubic model \eqref{MSN} is currently lacking. Although \cite{Nolet2017} explored system \eqref{MSN} and identified Hopf, Bogdanov--Takens, and generalized Hopf (Bautin) bifurcations through numerical continuation, these findings have not yet been supported by a formal analytical framework. In particular, the bifurcations in the cubic formulation have not been established through rigorous stability theory, nor has its slow--fast structure been systematically developed within a geometric singular perturbation setting.

The present work addresses these gaps by providing a unified and fully analytical framework for the cubic model~\eqref{MSN}. In particular, we rigorously establish the existence of Bautin (generalized Hopf) bifurcations, explicitly construct homoclinic orbits and analyze their persistence using Melnikov theory, and develop a systematic slow--fast reduction based on geometric singular perturbation theory. 
Together, these results extend the geometric understanding previously available for system~\eqref{eq_hans} to the richer bifurcation structure of the cubic model~\eqref{MSN}, providing a coherent analytical explanation of both local and global dynamical mechanisms.

In a related line of research, Quinn \cite{Quinn2018} explored system \eqref{MSN} to study delayed-feedback mechanisms, while stochastic extensions of the SM model have been addressed by Alexandrov \cite{Alexandrov2024} and Roberts \cite{Roberts2015}. In contrast, the present study focuses on deterministic mechanisms and their geometric organization.

We organize the paper as follows. In Section \ref{sec:model}, we set up the slow-fast model and derive the reduced dynamics on the critical manifold. Section \ref{sec_pts} outlines the conditions for the existence of equilibria and examines their local stability. In Section \ref{sec_hopf}, we analyze Hopf bifurcations using $r, s,$ and $p$ as parameters, compute the associated Lyapunov coefficients, and characterize the generalized Hopf (Bautin) bifurcation. 
In addition, we identify Bogdanov--Takens bifurcations that organize the dynamics. Section \ref{sec_hamilton} is devoted to their unfolding near Bogdanov--Takens points and to the derivation of a Hamiltonian limit, where homoclinic orbits are explicitly constructed and their persistence is determined via Melnikov theory. In this context, we distinguish between the symmetric case $s=0$, which is included for mathematical completeness, and the physically meaningful case $s>0$, where the nonlinear self-interaction in the atmospheric CO$_2$ dynamics breaks the symmetry.
Section \ref{sub_sn0} is devoted to the asymmetric case $s>0$ and its effect on the homoclinic structure, while Section \ref{sub_s0} focuses on the symmetric case $s=0$. Finally, we conclude in Section \ref{sec_conclusion}.
To improve the flow and readability of the manuscript, detailed and lengthy calculations of the Lyapunov coefficients and Melnikov integrals are presented in the appendices.

\section{The slow--fast model and critical manifold analysis}\label{sec:model}
In this section, we reformulate system~\eqref{MSN} within a slow-fast framework, 
corresponding to the physically relevant regime in which the North Atlantic Deep Water (NADW) 
adjusts on a much faster timescale than the total ice mass. This assumption is consistent with 
previous studies of the Maasch--Saltzman model (see, e.g.,~\cite{Engler2017}), although the model 
considered here includes additional nonlinear feedback mechanisms.

Accordingly, we focus on the parameter regime $q \gg 1$, and write $q = 1/\varepsilon$ with $0 < \varepsilon \ll 1$. The variables $x$, $y$, and $z$ represent the anomalies of total ice mass, atmospheric CO$_2$ concentration, and deep-ocean temperature/salinity, respectively, while $p$, $r$, and $s$ are positive parameters. Since $q$ measures the ratio of time scales, the regime $q \gg 1$ implies that
$z$ evolves faster than $x$ and $y$. Substituting $q = 1/\varepsilon$ into
system~\eqref{MSN} yields the slow--fast system
\begin{equation}\label{MSNe}
\begin{aligned}
    \dot{x} &= -x - y,\\
    \dot{y} &= ry - pz - sy^2 - y^3,\\
    \varepsilon\dot{z} &= -(x + z),
\end{aligned}
\end{equation}
where the overdot denotes differentiation with respect to the \emph{slow time}
$t$. Rewriting system~\eqref{MSNe} in the fast time $\tau := t/\varepsilon$
gives
\begin{equation}\label{MSNf}
\begin{aligned}
    x' &= -\varepsilon(x + y),\\
    y' &= \varepsilon(ry - pz - sy^2 - y^3),\\
    z' &= -(x + z),
\end{aligned}
\end{equation}
where $'$ denotes differentiation with respect to $\tau$.

We now analyze the dynamics of system~\eqref{MSNe} on the critical manifold
arising in the singular limit $\varepsilon \to 0$, which reveals a range of
phenomena beyond those previously reported. Setting $\varepsilon = 0$ in
\eqref{MSNe} and~\eqref{MSNf} yields, respectively, the  \emph{reduced}  and the \emph{layer} equations:
\begin{equation}\label{MSN:reduced_layer}
\begin{alignedat}{4}
    \dot{x} &= -x - y,       &\qquad\qquad  x' &= 0,          \\[2mm]
    \dot{y} &= ry - pz - sy^{2} - y^{3},   &  y' &= 0,          \\[2mm]
           0 &= -(x + z),    &              z' &= -(x + z).
\end{alignedat}
\end{equation}

The equilibria of the layer system define the
critical manifold
$$\mathcal{M}_0 = \bigl\{(x,y,z)\in\mathbb{R}^3 \mid z = -x\bigr\},$$
which serves simultaneously as the phase space of the reduced equations and
as the geometric object governing the slow dynamics of
system~\eqref{MSNe} in the singular limit $\varepsilon \to 0$. Normal
hyperbolicity of $\mathcal{M}_0$ follows immediately from the fact that the
linearization of the layer equations transverse to $\mathcal{M}_0$ has the
single eigenvalue $\lambda = -1 < 0$, so $\mathcal{M}_0$ is normally
hyperbolic and attracting on its entire extent. Substituting the constraint
$z = -x$ into the differential equations of the reduced
system yields the two-dimensional \emph{reduced
problem}
\begin{equation}
\label{MSNcero}
\begin{aligned}
    \dot{x} &= -x - y, \\
    \dot{y} &= px + ry - sy^2 - y^3,
\end{aligned}
\end{equation}
which completely describes the slow flow on $\mathcal{M}_0$.

According to geometric singular perturbation theory (GSPT for short) of  Fenichel \cite{Fenichel}, the normal hyperbolicity of $\mathcal{M}_0$ ensures the existence of a nearby slow manifold $\mathcal{M}_\varepsilon$ for any sufficiently small $\varepsilon >0$. This manifold can be expressed as a graph:
\begin{equation}
\label{eqinv}
\mathcal{M}_{\varepsilon}=\: \{ (x,y,z)\: |\: z=h_{\varepsilon}(x,y)\},
\end{equation}
and it attracts nearby trajectories exponentially fast in the fast variable. The flow on $\mathcal{M}_{\varepsilon}$ 
converges to the dynamics on $\mathcal{M}_0$ as $\varepsilon\to0$.

In the remainder of this section, we analyze the reduced system~\eqref{MSNcero}, focusing on the existence and stability of its equilibria and the bifurcations they undergo. This provides a foundation for the study of oscillatory behavior emerging in the full system, including the role of Hopf,  generalized Hopf and homoclinic  bifurcations in organizing the model’s long-term dynamics.
Throughout this analysis, the existence of equilibria is obtained from the associated algebraic equations, while all stability conditions are derived directly from the Jacobian matrix and have been systematically verified using symbolic computation, ensuring full consistency between the existence and stability regions.

\subsection{Existence of equilibria}\label{sec_pts}
To determine the equilibrium points of the reduced system~\eqref{MSNcero}, we set the right-hand sides of both equations equal to zero. The origin $(0,0)$ is always an equilibrium point.
Any additional equilibria must satisfy $y=-x$. Substituting this relation into system~\eqref{MSNcero}, we obtain the quadratic equation
\begin{equation}\label{eq:quad}
x^2 - s x + p - r = 0.
\end{equation}
Thus, the equilibria are given by the origin together with the real solutions of \eqref{eq:quad}.
Let
\begin{equation}\label{eq:discr}
    \Delta = s^2 + 4(r - p)
\end{equation}

denote the discriminant of \eqref{eq:quad}. The number of equilibria depends on the sign of $\Delta$.
\begin{proposition}\label{pprop1}
Assume $p>0$, $r>0$, and $s \ge 0$. Then, the origin $P_0:=(0,0)$ is always an equilibrium of system \eqref{MSNcero}. Additional equilibria lie on the line $y=-x$ and are determined by \eqref{eq:quad} and its discriminant \eqref{eq:discr}. To be precise:
\begin{enumerate}
\item[{1.}] If $\Delta < 0$, then the origin, $P_0$, is the only equilibrium.
\item[{2.}] If $\Delta = 0$, then additional to $P_0$, there exists a degenerate equilibrium at $(x,y)=\left(\frac{s}{2}, -\frac{s}{2}\right)$.
\item[{3.}] If $\Delta > 0$, then \eqref{eq:quad} has two real roots
\begin{equation}\label{eq:equs}
x_{1,2}^* = \frac{1}{2}\left(s \pm \sqrt{s^2 + 4(r - p)}\right),
\end{equation}
which yield two equilibria
\[
P_1:=(x_1^*,-x_1^*), \qquad P_2:=(x_2^*,-x_2^*).
\]
If $p=r$, then $x_2^*=0$, so $P_2=(0,0)$ coincides with the origin, and system~\eqref{MSNcero} has exactly two distinct equilibria: the origin $P_0$ and $P_1=(s,-s)$.
\end{enumerate}
\end{proposition}

\noindent To avoid ambiguity, we explicitly analyze the degenerate case $s=0$ in the next result.
\begin{corollary}
When $s=0$, equation~\eqref{eq:quad} reduces to $x^2 + (p-r)=0$, and the cases of 
Proposition~\ref{pprop1} simplify as follows:
\begin{enumerate}
\item[{1.}] If $r < p$, then $\Delta < 0$ and the origin is the unique equilibrium.
\item[{2.}] If $r = p$, then $\Delta = 0$ and the origin is again the unique equilibrium.
\item[{3.}] If $r > p$, then $\Delta > 0$ and system~\eqref{MSNcero} has exactly three 
equilibria: the origin $P_0$ and the two symmetric points 
$P_{1,2} = \left(\pm\sqrt{r-p},\, \mp\sqrt{r-p}\right)$.
\end{enumerate}
\end{corollary}

The next subsection examines the local stability of the equilibria by computing the eigenvalues of the Jacobian matrix at each equilibrium point. In particular, we identify conditions under which the Jacobian admits a pair of purely imaginary eigenvalues, a necessary condition for the onset of oscillatory dynamics near equilibrium (see, for instance, \cite{Guckenheimer}).

\subsection{Local stability of the origin}
We investigate the stability of the trivial equilibrium point $(0,0)$ by linearizing the system~\eqref{MSNcero} around this point. The corresponding Jacobian matrix takes the form:
\begin{equation}
\label{jacobiana2}
\begin{pmatrix}
-1 & -1\\
p & r\\
\end{pmatrix}
\end{equation}
with  characteristic equation 
$$\lambda^2+(1-r)\lambda +p-r =0.$$
The associated eigenvalues are 
\begin{equation}\label{val_cero}
\lambda_{1,2} = \frac{1}{2}\left( r-1\pm \sqrt{(1+r)^2-4p}\right).
\end{equation}
To characterize the local behavior near $(0,0)$, we examine the sign and nature of these eigenvalues. This leads to the following classification result.
\begin{proposition} \label{prop_stab}
Let $r>0$ and $p>0$. The stability of the  equilibrium point $(0,0)$ of system \eqref{MSNcero} is determined as follows:
\begin{enumerate}
\item[1.] if $r>1$ and $r<p<\frac{1}{4}(r+1)^2$:  an unstable node.
\item[2.] if $r>1$ and $p=\frac{1}{4}(r+1)^2$:  a degenerate unstable node.
\item[3.] if $r>1$ and $p>\frac{1}{4}(r+1)^2$:   an unstable focus.
\item[4.] if $r<1$ and $p>\frac{1}{4}(r+1)^2$:   a stable focus.
\item[5.] if $r<1$ and $p=\frac{1}{4}(r+1)^2$:   a degenerate stable node.
\item[6.] if $r<1$ and $r<p<\frac{1}{4}(r+1)^2$: a stable node.
\item[7.] if $r=1$ and $p>1$:  a linear center. 
\item[8.] if $0< p<r$:  a saddle point. 
\end{enumerate}
\end{proposition}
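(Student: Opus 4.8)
The plan is to read off the type of the equilibrium $(0,0)$ directly from the trace, determinant and discriminant of the Jacobian \eqref{jacobiana2}, and then translate each sign pattern into the standard planar phase-portrait classification. Write $T = r-1$ for the trace, $D = p-r$ for the determinant, and $\delta = (1+r)^2 - 4p$ for the discriminant of the characteristic polynomial $\lambda^2 - T\lambda + D = 0$, so that the eigenvalues in \eqref{val_cero} read $\lambda_{1,2} = \tfrac12\bigl(T \pm \sqrt{\delta}\,\bigr)$. Since the off-diagonal entries of \eqref{jacobiana2} are nonzero, that matrix is never a scalar multiple of the identity; hence whenever $\delta = 0$ it carries a single nontrivial Jordan block and $(0,0)$ is an improper (degenerate) node rather than a star node.

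First I would dispose of the saddle case: if $0 < p < r$ then $D < 0$, so $\lambda_1\lambda_2 < 0$, the eigenvalues are real of opposite sign, and item~8 follows. In every remaining item one has $p > r$ — either directly by hypothesis (items~1 and~6), or because $p > \tfrac14(1+r)^2 \ge r$ when $r\neq 1$ (items~3 and~4), using $\tfrac14(1+r)^2 - r = \tfrac14(1-r)^2 > 0$. Thus $D > 0$ throughout, so $(0,0)$ is a node, focus, or center according to the sign of $\delta$, and its stability is then governed solely by the sign of $T = r-1$.

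Next I would split on the sign of $\delta$. The threshold $\delta = 0$ is exactly $p = \tfrac14(1+r)^2$; the regime $\delta > 0$ (real eigenvalues, node) is $r < p < \tfrac14(1+r)^2$, while $\delta < 0$ (complex conjugate eigenvalues, focus) is $p > \tfrac14(1+r)^2$. Combining with $\operatorname{sign}(T) = \operatorname{sign}(r-1)$ then yields items~1--6: $r>1$ produces the three unstable types (node, degenerate node, focus) and $r<1$ the three stable ones. Finally, item~7 is immediate: $r = 1$ forces $T = 0$ while $D = p-1 > 0$, so $\lambda_{1,2} = \pm i\sqrt{p-1}$ are purely imaginary and the linearization is a center.

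I do not expect a real obstacle here; the argument is bookkeeping on the trace--determinant data of a $2\times 2$ matrix. The only points needing a little care are (i) confirming that the cases $\delta = 0$ give \emph{improper} nodes, which follows from the non-scalar structure of \eqref{jacobiana2}; (ii) checking $D>0$ in items~3--4 so that those equilibria are genuinely foci and not saddles; and (iii) stressing that item~7 asserts only that the \emph{linearization} is a center — whether the full nonlinear system \eqref{MSNcero} has a true center or a fine focus at the origin is exactly the Lyapunov-coefficient question taken up in Section~\ref{sec_hopf}. It is also worth noting that the boundary $p = r$, excluded from the list, is precisely where $P_2$ merges with the origin (Proposition~\ref{pprop1}, item~4), so it is naturally handled in the bifurcation analysis rather than in this stability classification.
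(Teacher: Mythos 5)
Your proof is correct and follows essentially the same route as the paper's: read off the type of $(0,0)$ from the trace, determinant and discriminant of the Jacobian \eqref{jacobiana2} and translate the sign pattern into the standard planar classification. In fact your version is the cleaner of the two --- the paper's own proof contains sign slips (it calls the $r>1$ node ``stable'' and the $r<1$ focus ``unstable'', contradicting the statement being proved), whereas your case analysis is consistent throughout, and your observation that the repeated-eigenvalue case yields an \emph{improper} node because \eqref{jacobiana2} is never a scalar matrix is a detail the paper leaves implicit.
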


Proof. If $p-r<0$, then $\det(J)<0$, so the eigenvalues are real and of opposite signs. Hence, the origin is a saddle point.
Assume now that $p-r>0$. If
\[
r<p<\frac{(r+1)^2}{4},
\]
then the discriminant is positive and both eigenvalues are real. In this case, their sum is
\[
\lambda_1+\lambda_2=r-1.
\]
Therefore, if $r>1$, then $\lambda_1,\lambda_2>0$, and the origin is an unstable node. If $r<1$, then $\lambda_1,\lambda_2<0$, and the origin is a stable node.
At $p=\frac{(r+1)^2}{4}$, the eigenvalues coincide and are given by
\[
\lambda_1=\lambda_2=\frac{r-1}{2}.
\]
Hence, the origin is a degenerate node, which is unstable if $r>1$ and stable if $r<1$.
Finally, if
\[
p>\frac{(r+1)^2}{4},
\]
then the eigenvalues are complex conjugates with real part
\[
\mbox{Re}\:(\lambda_{1,2})=\frac{r-1}{2}.
\]
Hence, if $r>1$, the origin is an unstable focus; if $r<1$, it is a stable focus. When $r=1$ and $p>1$, the eigenvalues are purely imaginary, so the origin is a linear center.

This result describes the stability of the origin. In particular, the change at $r=1$
marks the transition from a stable equilibrium to an unstable one and identifies the parameter regime where a Hopf bifurcation may occur.

\subsection{Stability analysis of equilibria outside the origin}
Let $P_i=(x_i^*,-x_i^*)$, $i=1,2$ (see \eqref{eq:equs}), denote the two additional equilibrium points. 
To analyse their linear stability, we consider the Jacobian matrix of system \eqref{MSNcero} at $P_i$, which yields the Jacobian matrix
\begin{equation}\label{matjp}
J_i=\begin{pmatrix}
-1 & -1\\
p & r+2 s x_i^*-3(x_i^*)^2
\end{pmatrix}.
\end{equation}

The eigenvalues of $J_i$ are determined by the characteristic equation
\[
\lambda^2 - \operatorname{tr}(J_i)\lambda + \det(J_i)=0,
\]
where
\[
\operatorname{tr}(J_i) = r-1+2sx_i^*-3(x_i^*)^2, 
\qquad
\det(J_i) = p-r-2sx_i^*+3(x_i^*)^2
\]
denote the trace and determinant of $J_i$, respectively.

According to the Routh--Hurwitz criterion, both eigenvalues have negative real parts if and only if
\[
\operatorname{tr}(J_i) < 0 \quad \text{and} \quad \det(J_i) > 0,
\]
see \cite{Dumortier}. Furthermore, if $\operatorname{tr}(J_i) = 0$ and $\det(J_i) > 0$, then the eigenvalues form a pair of purely imaginary complex conjugates.

\medskip

In the case $s=0$, we have $(x_1^*)^2 = r - p$, and therefore $x_1^* = \pm \sqrt{r - p}$. Substituting into the expressions for the trace and determinant of the Jacobian matrix yields
\[
\det(J_1)=2(r-p), \qquad \operatorname{tr}(J_1)=-2r-1+3p.
\]
Hence, the conditions $\det(J_1)>0$ and $\operatorname{tr}(J_1)<0$ are equivalent to $r>p$ and $p<\frac{1+2r}{3}$, respectively. Combining these inequalities, we obtain
\[
0<p<\min\left\{r,\tfrac{1+2r}{3}\right\}.
\]

\medskip

Applying the Routh--Hurwitz criterion to the Jacobian matrix at $P_1$, we now derive the stability conditions for $s>0$. These are obtained by substituting the explicit expressions of $x_i^*$ from \eqref{eq:equs} into $\operatorname{tr}(J_i)$ and $\det(J_i)$ and solving the inequalities $\operatorname{tr}(J_i)<0$ and $\det(J_i)>0$.

For $s>0$, substituting
\[
x_1^*=\frac{1}{2}\left(s+\sqrt{s^2+4(r-p)}\right)
\]
into the trace gives
\[
\operatorname{tr}(J_1)
=
3p-2r-1-\frac{s^2}{2}
-\frac{s}{2}\sqrt{s^2+4(r-p)}.
\]
Thus, the condition $\operatorname{tr}(J_1)<0$ is equivalent to
\[
3p-2r-1-\frac{s^2}{2}
-\frac{s}{2}\sqrt{s^2+4(r-p)}<0.
\]
Solving the equation $\operatorname{tr}(J_1)=0$ for $r$ yields the Hopf boundary
\[
r=
\frac{1}{8}\left(-4+12p-s^2-s\sqrt{s^2+8p-8}\right).
\]
Therefore, the inequality $\operatorname{tr}(J_1)<0$ implies
\[
r>\frac{1}{8}\left(-4+12p-s^2-s\sqrt{s^2+8p-8}\right).
\]

Collecting the above results, we obtain the following characterization of the stability of $P_1$ and $P_2$.
\begin{proposition}\label{prop3}
The following statements hold for system \eqref{MSNcero}. The equilibrium point $P_1$ is stable:
\begin{enumerate}
\item[1.] For $s=0$, $r>0$, and
\[
0<p<\min\left\{r,\frac{1+2r}{3}\right\}.
\]
\item[2.] For $s>0$, $0<p\leq 1$, and $p<r$.
\item[3.] For $s>0$, $p>1$, and
\[
r>\tfrac{1}{8}\left(-4+12p-s^2-s\sqrt{s^2+8p-8}\right).
\]
\end{enumerate}
\end{proposition}

\begin{proposition}\label{prop4}
The following statements hold for system \eqref{MSNcero}. The equilibrium point $P_2$ is stable:
\begin{enumerate}
\item[1.] For $s>0$, $0<p\leq 1$, and $p<r$.
\item[2.] For $s>0$, $p>1$, and
\[
r>\frac{1}{8}\left(-4+12p-s^2+s\sqrt{s^2+8p-8}\right).
\]
\end{enumerate}
\end{proposition}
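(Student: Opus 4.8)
The plan is to verify, in each of the two parameter regimes, the Routh--Hurwitz conditions for the Jacobian $J_2$ from \eqref{matjp}, namely $\det(J_2)>0$ and $\text{tr}(J_2)<0$; for a $2\times2$ matrix these are equivalent to asymptotic stability of $P_2$. The first step is an algebraic simplification. Since $x_2^*$ is a root of $x^2-sx+(p-r)=0$, we have $(x_2^*)^2=sx_2^*+r-p$, and substituting this into the trace and determinant formulas recorded just before the proposition reduces them to the affine forms
\[
\text{tr}(J_2)=3p-2r-1-sx_2^*,\qquad \det(J_2)=2(r-p)+sx_2^*.
\]
Writing $D:=\sqrt{s^2+4(r-p)}\ge 0$ (real precisely on the region $0<p<r+\tfrac14 s^2$ where $P_2$ exists, by Proposition~\ref{pprop1}) and using $x_2^*=\tfrac12(s-D)$, each of the two conditions becomes a comparison between an affine expression in $r,p,s^2$ and $\tfrac{s}{2}D\ge 0$; once the sign of the affine part is pinned down, the comparison can be resolved by squaring.

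For the determinant this gives $\det(J_2)>0\iff 2(r-p)+\tfrac{s^2}{2}>\tfrac{s}{2}D$; the left side is $\ge 0$ because $D$ being real forces $2(r-p)\ge-\tfrac{s^2}{2}$, so squaring and simplifying leaves $(r-p)\bigl(s^2+4(r-p)\bigr)=(r-p)D^2>0$, hence $\det(J_2)>0\iff r>p$. Consequently $P_2$ is asymptotically stable exactly when $r>p$ and $\text{tr}(J_2)<0$; moreover $r>p$ together with the standing sign hypotheses already guarantees that $P_2$ exists, since then $0<p<r\le r+\tfrac14 s^2$.

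For Part~1, the hypotheses $s>0$, $0<p\le1$, $p<r$ immediately yield $\det(J_2)>0$ and the existence of $P_2$. It remains to show $\text{tr}(J_2)<0$, i.e. $\tfrac{s}{2}D<A$ with $A:=2r+1+\tfrac{s^2}{2}-3p$; using $r>p$ one gets $A>1-p+\tfrac{s^2}{2}>0$, so squaring is legitimate and the inequality simplifies to $s^2(2p-r-1)<(2r-3p+1)^2$. Because $2p-r-1<p-1\le 0$ (strict since $p<r$), the left side is strictly negative and the right side is a square, so the inequality holds and Part~1 follows. For Part~2 ($s>0$, $p>1$), the same squaring turns $\text{tr}(J_2)<0$ into the quadratic inequality $4r^2+(s^2-12p+4)r+\bigl((3p-1)^2-s^2(2p-1)\bigr)>0$, whose discriminant computes to $s^2(s^2+8p-8)>0$; hence the quadratic has real roots $r_\pm=\tfrac18\bigl(-4+12p-s^2\pm s\sqrt{s^2+8p-8}\bigr)$ and is positive exactly for $r<r_-$ or $r>r_+$. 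The stated hypothesis is $r>r_+$; one then checks that $A>0$ there (since $r_+-\tfrac14(6p-2-s^2)=\tfrac18(s^2+s\sqrt{s^2+8p-8})>0$) and that $r_+>p$ (equivalent to $s\sqrt{s^2+8(p-1)}>s^2-4(p-1)$, which is immediate when the right side is $\le 0$ and otherwise follows from $s^2>4(p-1)>p-1$), so $\det(J_2)>0$, $P_2$ exists, and $\text{tr}(J_2)<0$; Routh--Hurwitz then gives asymptotic stability.

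The only real obstacle is the bookkeeping: tracking the sign conditions that make each squaring step an equivalence, and noticing that the discriminant of the Part~2 quadratic collapses to $s^2(s^2+8p-8)$, the identity responsible for the closed-form threshold $r_+$ appearing in the statement. Beyond the Routh--Hurwitz test and the elimination of $(x_2^*)^2$ via the defining quadratic of $x_2^*$, no new idea is needed.
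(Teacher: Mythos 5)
Your proof is correct, and it follows exactly the route the paper indicates (the Routh--Hurwitz test $\operatorname{tr}(J_2)<0$, $\det(J_2)>0$ stated just before the proposition), merely supplying the algebraic details the paper omits. The key simplifications check out: eliminating $(x_2^*)^2$ via $x^2-sx+(p-r)=0$ gives $\det(J_2)=2(r-p)+sx_2^*$ and $\operatorname{tr}(J_2)=3p-2r-1-sx_2^*$, the determinant condition reduces to $r>p$, and the discriminant of your Part~2 quadratic is indeed $s^2(s^2+8p-8)$, recovering the threshold $r_+$ in the statement (consistent with the trace vanishing at $r=r_+$, the Hopf locus for $P_2$ given in the following proposition).
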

\medskip

The stability conditions in Propositions~\ref{prop3} and \ref{prop4} define the regions in parameter space where $\operatorname{tr}(J_i) < 0$, while the corresponding equalities characterize the Hopf bifurcation curves described in the next proposition. From a dynamical viewpoint, Proposition~\ref{prop_stab} characterizes the stability of the origin and identifies the parameter regime where a Hopf bifurcation may occur, while Propositions~\ref{prop3} and~\ref{prop4} describe the stability of the nontrivial equilibria $P_1$ and $P_2$, which organize the system dynamics and possible transitions between regimes.

\begin{proposition}
The differential system \eqref{MSNcero} possesses a pair of purely imaginary eigenvalues $\pm i \omega$, with $\omega >0$, under the following conditions.
\begin{enumerate}
\item[1.] At the origin  for  $s> 0$, $p>1$,  and $r=1$.
\item[2.] At $P_1$ for $s> 0$, $p>1$, and  $r= \frac{1}{8} (-4+12p-s^2-s\sqrt{s^2+8p-8})$.
\item[3.] At $P_2$ for $s> 0$, $p>1$, and $r= \frac{1}{8} (-4+12p-s^2+s\sqrt{s^2+8p-8})$.
\end{enumerate}
\end{proposition}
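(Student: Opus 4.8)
In all three cases the assertion is that the relevant Jacobian has eigenvalues $\pm i\omega$ with $\omega>0$, and, as recalled just before the statement, this is equivalent to the pair of conditions $\operatorname{tr}=0$ and $\det>0$ (with $\omega=\sqrt{\det}$). So the plan is to impose $\operatorname{tr}=0$, solve for the parameter $r$ in each case, and then check that the resulting determinant is strictly positive.

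\textbf{Part 1 (the origin).} Here I would simply read off the trace and determinant of the Jacobian \eqref{jacobiana2}: $\operatorname{tr}=r-1$ and $\det=p-r$. Vanishing of the trace forces $r=1$, and the characteristic equation then becomes $\lambda^2+(p-1)=0$, with roots $\pm i\sqrt{p-1}$. These are purely imaginary with $\omega=\sqrt{p-1}>0$ precisely when $p>1$ (if $p=1$ one instead gets a double zero eigenvalue). This reproduces the stated condition $s>0$, $p>1$, $r=1$; the Jacobian at the origin does not involve $s$, so the hypothesis $s>0$ is only there to stay in the regime where $P_1$ and $P_2$ also exist.

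\textbf{Parts 2 and 3 ($P_1$ and $P_2$).} The key observation is that $\operatorname{tr}(J_i)=r-1+2sx_i^*-3(x_i^*)^2$ from \eqref{matjp} contains a quadratic term in $x_i^*$ that can be eliminated using the defining relation $(x_i^*)^2=sx_i^*-(p-r)$, since $x_i^*$ is a root of $x^2-sx+p-r=0$. Substituting, $\operatorname{tr}(J_i)=0$ collapses to the \emph{linear} equation $sx_i^*=3p-2r-1$, hence $x_i^*=(3p-2r-1)/s$ (using $s>0$). Plugging this back into the quadratic $x^2-sx+p-r=0$ yields the compatibility condition
\[
(3p-2r-1)^2=s^2(2p-r-1),
\]
a quadratic equation in $r$. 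Solving it, the discriminant simplifies (the one genuinely computational step) to $s^2(s^2+8p-8)$, giving the two roots
\[
r=\tfrac18\bigl(-4+12p-s^2\pm s\sqrt{s^2+8p-8}\,\bigr).
\]
It then remains to (i) match the ``$-$'' branch with $P_1$ and the ``$+$'' branch with $P_2$, which I would do by comparing $2x_i^*-s=(6p-4r-2-s^2)/s$ on each branch with $2x_1^*-s=+\sqrt{\Delta}$ and $2x_2^*-s=-\sqrt{\Delta}$, where $\Delta=s^2+4(r-p)$ as in \eqref{eq:equs}; under $p>1$ the ``$-$'' branch gives a nonnegative value (matching $P_1$) and the ``$+$'' branch a negative one (matching $P_2$). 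And (ii) check the determinant: when $\operatorname{tr}(J_i)=0$ one has $2sx_i^*-3(x_i^*)^2=1-r$, so $\det(J_i)=p-r-(2sx_i^*-3(x_i^*)^2)=p-1$, which is positive exactly when $p>1$ — this also shows $\omega=\sqrt{p-1}$ in all three cases. Finally one should confirm that $P_1,P_2$ genuinely exist for these $r$-values, i.e. that $0<p<r+\tfrac14 s^2$ as required by Proposition~\ref{pprop1}; a short computation gives $r+\tfrac14 s^2-p=\tfrac18\bigl(4(p-1)+s^2\mp s\sqrt{s^2+8p-8}\,\bigr)>0$ whenever $p>1$.

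\textbf{Main obstacle.} I expect the difficulty to be bookkeeping rather than conceptual: carrying the discriminant simplification through cleanly and, above all, pinning down which branch of $r$ belongs to $P_1$ versus $P_2$, since squaring the relation $2x_i^*-s=(6p-4r-2-s^2)/s$ discards sign information that must be recovered from the inequality $p>1$. Everything else is a direct application of the trace–determinant criterion.
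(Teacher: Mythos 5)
Your proposal is correct and follows essentially the same route the paper takes: the paper states the trace--determinant criterion just before the proposition and, in its proof of Proposition~\ref{propl1}, derives the same value of $r$ by imposing zero trace and positive determinant of the Jacobian at the shifted equilibrium. Your additional checks (using $(x_i^*)^2=sx_i^*-(p-r)$ to linearize the trace condition, matching the $\mp$ branches to $P_1$ and $P_2$ via the sign of $2x_i^*-s$, verifying $\det(J_i)=p-1$ and the existence condition $p<r+\tfrac14 s^2$) are all sound and in fact make the argument more complete than what is written in the paper.
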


We point out that when $p=r+\frac{1}{4}s^2$, the equilibrium points $P_1$ and $P_2$ coalesce at $(x,y)=(\frac{s}{2},-\frac{s}{2})$, and the corresponding Jacobian matrix has a zero eigenvalue,  while the other eigenvalue is  $\lambda  = r+\frac{s^2}{4}-1\neq 0$, indicating that the equilibrium point is  non-hyperbolic. In this case, Sotomayor’s theorem (\cite{Perko}, $\S$4.2) must be applied to determine the conditions under which saddle-node and transcritical bifurcations occur.
In particular, when $p = 1$ and $r = 1 - \frac{1}{4}s^2$, the Jacobian matrix has double zero eigenvalues, signaling the presence of a codimension-two Takens-Bogdanov bifurcation  (see \cite{Perko}, $\S$4.13). A similar bifurcation also occurs at the origin $(0,0)$ when $p = r = 1$.

\section{Hopf bifurcation analysis}\label{sec_hopf}
In this section, we derive sufficient conditions for the occurrence of Hopf bifurcations and analyse the resulting oscillatory behaviour. To determine the criticality of these bifurcations, we  apply Hopf's Theorem   together with the explicit formulas for the Lyapunov coefficient given in  \cite{Kuznetsov2023}. The parameters  $r$ and $p$ are treated as bifurcation parameters.

In the context of the Maasch-Saltzman model, Hopf bifurcations play a central role, as they provide a fundamental mechanism for the emergence of glacial-interglacial cycles. These bifurcations mark the onset of self-sustained oscillations emerging from steady states, capturing the alternating pattern between colder and warmer climate phases.  The associated limit cycles reflect essential aspects of Pleistocene climate variability, shaped by nonlinear feedbacks among ice volume, atmospheric CO$_2$, and deep ocean temperature.

\subsection{Hopf bifurcation at the origin}
The following proposition characterizes the Hopf bifurcation occurring at the origin of system \eqref{MSNcero}, where $l_1$ and $l_2$ denote the first and second Lyapunov coefficients, respectively.

\begin{proposition}\label{prop_bif1}
The origin of system \eqref{MSNcero} undergoes a Hopf bifurcation at $r=1$ for $p>1$. More precisely:
\begin{enumerate}
\item[1.] $r=1$, $p>1$: Hopf bifurcation at the origin.
\item[2.] $1<p<1+\frac{2}{3}s^2$: subcritical Hopf ($l_1>0$).
\item[3.] $p>1+\frac{2}{3}s^2$: supercritical Hopf ($l_1<0$).
\item[4.] $p=1+\frac{2}{3}s^2$: codimension-2 Bautin bifurcation.
In this case, the second Lyapunov coefficient satisfies
$$l_2=-\frac{5(3+2s^2)^4}{128\sqrt{6}\,s^7}<0 \quad \text{for all } s>0.$$
\end{enumerate}
\end{proposition}

The proof of Proposition \ref{prop_bif1} is provided in Appendix \ref{ap_lyap}.

The generalized Hopf bifurcation can be verified, numerically, by setting $r = 1$, $s = 1$, and $p = \frac{13}{10}$. This is illustrated in Figure \ref{bautin}, where two limit cycles emerge from the generalized Hopf point: the outer cycle is stable, while the inner cycle is unstable.
Additionally, the time series corresponding to these limit cycles are presented in Figure \ref{series-bautin}. It is important to note that the presence of a limit cycle in the model helps explain climate fluctuations. In this context, the stable limit cycle resulting from the generalized Hopf bifurcation generates oscillations in ice mass, carbon dioxide concentration, and mean ocean temperature, with both stable and unstable behaviors.
\begin{figure}[h!]
\begin{center} 
\begin{tikzpicture}
    \node at (0,0){\includegraphics[scale=0.55]{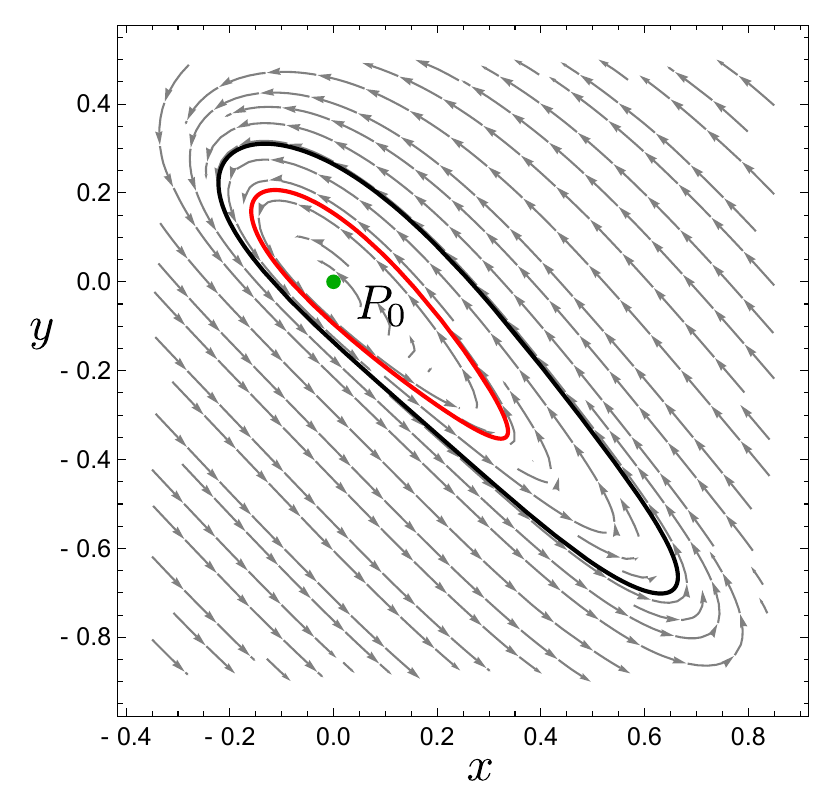} };
\end{tikzpicture}
\end{center}
\caption{The phase portrait in the $(x,y)$-plane of system \eqref{MSNcero} for $r = 0.96$, $s = 1$, and $p = 1.3$ reveals the presence of two limit cycles around the origin (in green). The inner limit cycle, shown in red, is unstable, while the outer limit cycle, depicted in black, is stable. 
}
\label{bautin}
\end{figure}

\begin{figure}[h!]
\begin{center} 
\subfigure[]{
\includegraphics[scale=0.45]{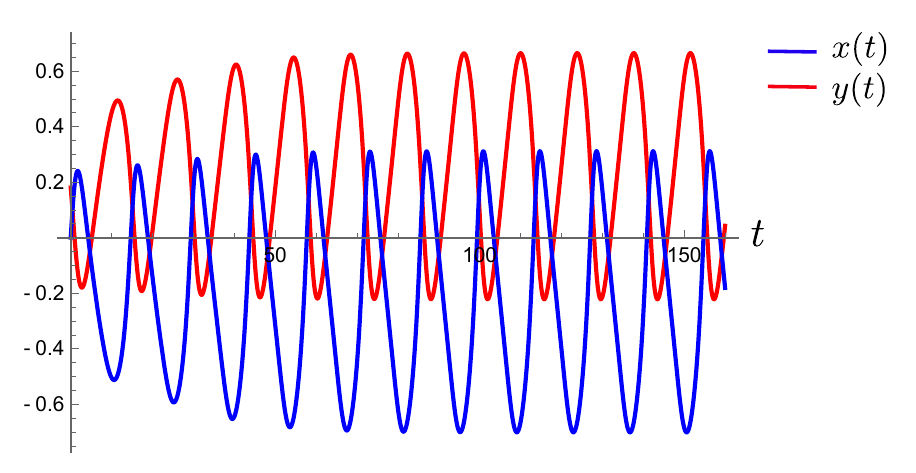}}\hspace{0.2cm}
 \subfigure[]{
\includegraphics[scale=0.45]{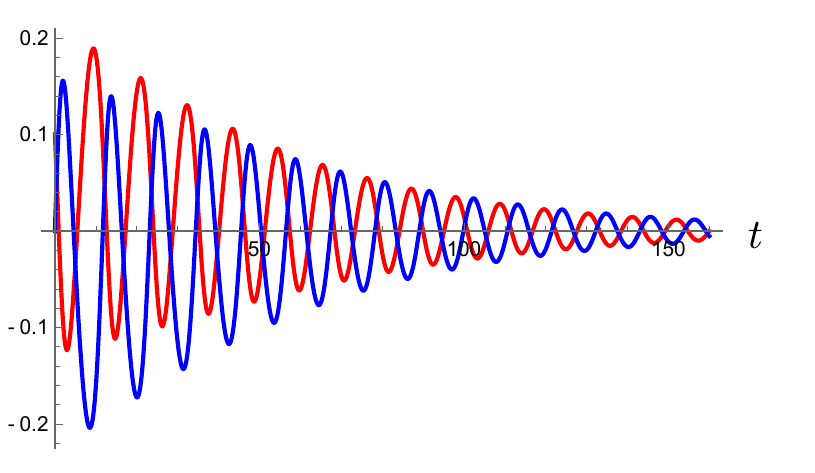}}
\end{center}
\caption{Time series of system~\eqref{MSNcero} illustrating distinct dynamical behaviors associated with stable and unstable limit cycles (see Figure~\ref{bautin}) for $p=1.3$, $r=0.96$, and $s=1$. The plots show the evolution of ice volume ($x$, blue) and atmospheric CO$_2$ concentration ($y$, red) for the initial conditions (a) $(x,y)=(0.18,0)$ and (b) $(x,y)=(0.1,0)$. In (a), the solution exhibits sustained oscillations, whereas in (b), the oscillations decay in amplitude and the trajectory converges to the equilibrium at zero.} \label{series-bautin}
\end{figure}

\section{Hopf bifurcations at nontrivial equilibria}
Before discussing the Hopf bifurcations at the nontrivial equilibria, we recall that the
variables in the nondimensional model represent anomalies (deviations from long-term mean
values) of total ice mass, atmospheric CO$_2$ concentration, and deep-ocean temperature.
Accordingly, oscillations around the origin correspond to fluctuations about the mean
climate state and do not imply physically unrealistic values of the variables. The
nontrivial equilibria should thus be understood as perturbed quasi-steady climate states
within this anomaly framework, rather than as absolute physical baselines.

In the context of the Saltzman-Maasch model \eqref{MSNcero}, Hopf bifurcations at non-origin equilibrium points $P_1$ and $P_2$ are crucial to capture the observed transitions between glacial and interglacial cycles. Unlike the origin, which typically represents a trivial or symmetric equilibrium state (e.g., an ice-free climate baseline), non-zero equilibria correspond to physically meaningful climate states with finite ice volume, 
CO$_2$ concentration, and ocean temperature. When Hopf bifurcations occur at these points, they can generate stable or unstable limit cycles, producing sustained oscillations that replicate the periodic nature of glacial-interglacial dynamics.

\begin{proposition}\label{propl1}\text{}
The equilibrium point $P_1$ undergoes a subcritical Hopf bifurcation when
 $ s\geq 0$, $p>1$,  and $r=\frac{1}{8}\left( -4+12p-s^2-s\sqrt{-8 + 8 p+s^2}\right)$.
\end{proposition}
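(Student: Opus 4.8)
The plan is to verify the three standard Hopf bifurcation conditions at $P_1$ — a pair of purely imaginary eigenvalues, transversality, and sign of the first Lyapunov coefficient — and then to sign-check $l_1$ to conclude subcriticality. The imaginary-eigenvalue condition is already established in the preceding Proposition: at $P_1$ the trace of the Jacobian $J_1$ in \eqref{matjp} vanishes precisely when $r = \tfrac{1}{8}\bigl(-4+12p-s^2-s\sqrt{-8+8p+s^2}\,\bigr)$, with $p>1$ and $s\ge 0$, and one checks that $\det(J_1)>0$ there so that the eigenvalues are $\pm i\omega$ with $\omega = \sqrt{\det(J_1)}>0$. So the first step is simply to record that fact and evaluate $x_1^*$, $\omega$, and the relevant partial derivatives at this parameter locus.

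Second, I would check transversality: compute $\frac{\partial}{\partial r}\bigl(\tfrac12\operatorname{tr}(J_1)\bigr)$ along the family, treating $r$ as the bifurcation parameter. Here one must be careful that $x_1^*$ itself depends on $r$ (and on $p$, $s$), so $\operatorname{tr}(J_1) = r - 1 + 2 s x_1^* - 3(x_1^*)^2$ must be differentiated with the chain rule, using $x_1^* = \tfrac12(s+\sqrt{s^2+4(r-p)})$ so that $\partial x_1^*/\partial r = 1/\sqrt{s^2+4(r-p)}$. The resulting expression should be evaluated at the bifurcation value of $r$ and shown to be nonzero; I expect it to reduce to something manifestly positive (or at least nonvanishing) for $p>1$, $s\ge0$, which gives the transversality condition. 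As in Proposition \ref{prop_bif1}, an alternative is to verify regularity of $(r,p)\mapsto(\mu,l_1)$, but the single-parameter transversality in $r$ should suffice.

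Third, and this is the main computational content, I would compute the first Lyapunov coefficient $l_1$ at $P_1$ using the formulas of Appendix \ref{ap1}. Concretely: translate $P_1$ to the origin, write the shifted system with its quadratic and cubic terms (the nonlinearity $-sy^2-y^3$ in \eqref{MSNcero} becomes, after $y \mapsto y - x_1^*$, a polynomial with new quadratic coefficient $-(s+3x_1^*)$ wait — more precisely $-s y^2 - y^3$ expands about $y=-x_1^*$ to give quadratic coefficient $-(s - 3x_1^*)$... I would carry this out carefully), then build the eigenvectors $\mathbf q$, $\mathbf p$ normalized as in \eqref{pq}, evaluate the multilinear forms $\mathcal B$, $\mathcal C$ in \eqref{BCD}, assemble $g_{20}, g_{11}, g_{21}$ via \eqref{gg}, and plug into \eqref{lyap_c1}. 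The goal is a closed-form $l_1$ as a function of $(p,s)$ on the bifurcation locus, and then to argue its sign is positive for all admissible $p>1$, $s\ge0$. The hard part will be exactly this: the algebra is heavier than in the origin case because $x_1^*$ is an irrational function of the parameters, so $\operatorname{tr}=0$ must be used aggressively to simplify, and the final sign determination may require factoring a polynomial in $\sqrt{-8+8p+s^2}$ and $s$ and checking positivity of its coefficients — possibly aided by the substitution $u = \sqrt{-8+8p+s^2}\ge 0$ to rationalize. Once $l_1>0$ is established, subcriticality follows immediately: an unstable limit cycle bifurcates, as claimed, and the proposition is proved.
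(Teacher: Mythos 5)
Your plan follows essentially the same route as the paper's proof: translate $P_1$ to the origin, impose $\operatorname{tr}=0$ and $\det>0$ to get $\omega=\sqrt{p-1}$ with $p>1$, compute $g_{20},g_{11},g_{21}$ from the Appendix formulas, and conclude from $l_1>0$; the paper's closed form is $l_1=\dfrac{p\,\bigl(24(p-1)+5s^2-3s\sqrt{s^2+8(p-1)}\bigr)}{32(p-1)^{5/2}}$, whose positivity follows from $\bigl(24(p-1)+5s^2\bigr)^2-9s^2\bigl(s^2+8(p-1)\bigr)=576(p-1)^2+168(p-1)s^2+16s^4>0$, so the sign check you anticipated does go through. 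The only divergence is that you also propose verifying transversality in $r$ (with the chain rule through $x_1^*$), a standard hypothesis the paper's proof omits for $P_1$, so including it is an improvement rather than a gap.
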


\begin{proof}
We start by shifting $P_1$ to the origin, by the use the coordinate transformation
$$
X = x+x_1^*, \qquad Y = y-x_1^*.
$$
After the translation, the system \eqref{MSNcero} becomes:
\begin{equation}
\label{trass1}
\begin{split}
\dot{X}&=\: -X-Y,\\
\dot{Y}&=\: pX -\frac{1}{2}  \Big(-6p + 4r+s^2+s \sqrt{-4p+4r+s^2}\Big)Y  \\
 & \hspace{3cm} +  \frac{1}{2}\Big(s + 3\sqrt{-4p+4r+s^2}\Big) Y^2 -  Y^3.
\end{split}
\end{equation}

For a Hopf bifurcation to occur at the equilibrium point $(0,0)$ of system \eqref{trass1},  the Jacobian matrix must satisfy two conditions:  it must have a positive determinant and zero trace.
These conditions lead in the following expression: 
$$ r = \frac{1}{8} (-4+12p-s^2)  - \frac{s}{8} \sqrt{s^2 + 8(p-1)},$$
which holds under the constraint  $p>1$. Furthermore, the condition 
$$
\sqrt{2}\sqrt{-4+4p+s^2-s \sqrt{8(p-1) + s^2}} = -s+\sqrt{s^2 +8(p-1)}.
$$
must also be satisfied.

Using these identities, the system \eqref{trass1} can be rewritten as
\begin{equation}
\label{tras1}
\begin{split}
\dot{X}&=\: -X-Y,\\
\dot{Y}&=\: pX +Y-\tfrac{1}{4}Y^2\left(s-3\sqrt{s^2+8(p-1)}+4Y\right).
\end{split}
\end{equation}
Now, the Jacobian matrix of system \eqref{tras1} at the equilibrium point $(0,0)$ is given by
\begin{equation}
\label{jacp1}
J=\left(
\begin{array}{cc}
 -1 & -1 \\
 p & 1 \\
\end{array}
\right).
\end{equation}
This matrix has a pair of purely imaginary eigenvalues $\lambda_{1,2} = \pm i\omega$, with $\omega = \sqrt{p - 1}$, which requires $p > 1$.

We now apply Hopf bifurcation theory to assess the stability of the equilibrium point and characterize the nature of the emerging periodic orbits. This analysis is carried out through the computation of Lyapunov coefficients.

As in Proposition \ref{prop_bif1}, the expression for $l_1$ is obtained using the  normal form formulas in \cite{Kuznetsov2023}.
The unit complex eigenvectors read as
\begin{equation}\label{pqu}
{\mathbf q} = \left( \frac{i}{2\omega},\frac{\omega-i}{2\omega}\right)^T, \qquad 
{\mathbf p} = \left(1+i\omega, 1\right)^T.
\end{equation}
In addition, one  also gets   
\begin{equation*}
\begin{split}
g_{20}&=\:\frac{(\sqrt{p-1}-i)^2(-s+3\sqrt{s^2+8(p-1)})}{8(p-1)},\\
g_{11}&=\:\frac{p(-s+3\sqrt{s^2+8(p-1)})}{8(p-1)},\qquad 
g_{21}=\: -\frac{3(\sqrt{p-1}-i)}{4\sqrt{(p-1)^3}}.
\end{split}
\end{equation*}
Then
 \begin{equation*}\label{l1P}
l_1 =\dfrac{p\:\big(24(p-1)+5s^2-3s\sqrt{s^2+8(p-1)}\: \big)}{32(p-1)^{5/2}} > 0,
\end{equation*}
for all $s\geq 0$ and $p>1$.  
Consequently,  a subcritical bifurcation Hopf occurs at $r= \frac{1}{8} \left(-4+12p-s^2-s\sqrt{s^2+8(p-1)}\right)$, resulting in an unstable limit cycle and a stable equilibrium point at $P_1$.
\end{proof}

\begin{proposition}\text{}
  The equilibrium point  $P_2$ undergoes a subcritical Hopf bifurcation, when 
$s\geq 0$, $p>1$, and $r=\frac{1}{8}\left( -4+12p-s^2+ s\sqrt{-8+8p+s^2}\right)$.
\end{proposition}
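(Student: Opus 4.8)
The proof runs in complete parallel with that of Proposition~\ref{propl1}; the only substantive change is that $x_2^*=\tfrac12\bigl(s-\sqrt{s^2+4(r-p)}\bigr)$ carries the opposite branch of the square root to $x_1^*$. First I would translate $P_2$ to the origin via $X=x-x_2^*$, $Y=y+x_2^*$, which turns system~\eqref{MSNcero} into the analogue of \eqref{trass1}: every occurrence of $\sqrt{-4p+4r+s^2}$ that came with a plus sign for $P_1$ now comes with a minus sign, so the coefficient of $Y^2$ becomes $\tfrac12\bigl(s-3\sqrt{-4p+4r+s^2}\bigr)$ and the coefficient of $Y$ becomes $-\tfrac12\bigl(-6p+4r+s^2-s\sqrt{-4p+4r+s^2}\bigr)$. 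Imposing zero trace and positive determinant on the Jacobian at the origin forces $w:=\sqrt{-4p+4r+s^2}$ to satisfy $w^2-sw-2(p-1)=0$, whose only admissible (nonnegative, since $p>1$) root is $w=\tfrac12\bigl(s+\sqrt{s^2+8(p-1)}\bigr)$; substituting back into the trace equation gives the threshold $r=\tfrac18\bigl(-4+12p-s^2+s\sqrt{-8+8p+s^2}\bigr)$, valid for $p>1$, together with the auxiliary identity $\sqrt2\,\sqrt{-4+4p+s^2+s\sqrt{8(p-1)+s^2}}=s+\sqrt{s^2+8(p-1)}$, which holds because both sides are nonnegative and squaring reduces it to $2w^2=2w^2$.

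Using this identity the translated system collapses to the $P_2$-counterpart of \eqref{tras1},
\begin{equation*}
\dot X=-X-Y,\qquad \dot Y=pX+Y-\tfrac14 Y^2\bigl(s+3\sqrt{s^2+8(p-1)}+4Y\bigr),
\end{equation*}
whose linearisation at the origin is again the matrix \eqref{jacp1}, with purely imaginary eigenvalues $\pm i\omega$, $\omega=\sqrt{p-1}$, and determinant $p-1>0$; the transversality condition is checked exactly as in Proposition~\ref{prop_bif1}. I would then reuse the eigenvectors \eqref{pqu}, evaluate the multilinear forms $\mathcal B$, $\mathcal C$ defined in \eqref{BCD} on the cubic vector field, and compute $g_{20}$, $g_{11}$, $g_{21}$ from \eqref{gg}. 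The sign flip in the quadratic coefficient turns the factor $-s+3\sqrt{s^2+8(p-1)}$ appearing in $g_{20}$ and $g_{11}$ into $-s-3\sqrt{s^2+8(p-1)}$, while $g_{21}=-\tfrac{3(\sqrt{p-1}-i)}{4\sqrt{(p-1)^3}}$ is unchanged because the cubic coefficient is still $-1$.

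Substituting into \eqref{lyap_c1}, the product $g_{20}g_{11}$ now involves $\bigl(s+3\sqrt{s^2+8(p-1)}\bigr)^2$ instead of $\bigl(3\sqrt{s^2+8(p-1)}-s\bigr)^2$, so the cross term $-3s\sqrt{s^2+8(p-1)}$ in the expression for $l_1$ obtained in Proposition~\ref{propl1} becomes $+3s\sqrt{s^2+8(p-1)}$. The result is
\begin{equation*}
l_1=\frac{p\bigl(24(p-1)+5s^2+3s\sqrt{s^2+8(p-1)}\bigr)}{32(p-1)^{5/2}},
\end{equation*}
which is manifestly positive for all $s\geq 0$ and $p>1$: unlike for $P_1$, no auxiliary estimate is needed here, since every term in the numerator is nonnegative. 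A positive first Lyapunov coefficient certifies a subcritical Hopf bifurcation at $r=\tfrac18\bigl(-4+12p-s^2+s\sqrt{-8+8p+s^2}\bigr)$, producing an unstable limit cycle that surrounds the locally asymptotically stable equilibrium $P_2$, as claimed.

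The only step demanding genuine care is the reduction of the translated system to the normal form displayed above: one must pick the branch of $w=\sqrt{-4p+4r+s^2}$ consistently with both the definition of $x_2^*$ and the Hopf constraint, and verify the surd identity on the domain $p>1$. Everything downstream is a sign-adjusted transcription of the $P_1$ computation, so once the reduction is pinned down the evaluation of $g_{20}$, $g_{11}$, $g_{21}$ and of $l_1$ is purely mechanical.
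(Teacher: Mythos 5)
Your argument follows the same route as the paper, which simply declares the $P_2$ case ``analogous to $P_1$'': translate the equilibrium, impose zero trace and positive determinant to pin down $r$, reduce to the normal form with quadratic coefficient $-\tfrac14\bigl(s+3\sqrt{s^2+8(p-1)}\bigr)$, and read off the sign of $l_1$. All of your intermediate identities check out (in particular $w=\sqrt{s^2+4(r-p)}$ must solve $w^2-sw-2(p-1)=0$, whose admissible root is $w=\tfrac12\bigl(s+\sqrt{s^2+8(p-1)}\bigr)$, exactly as you state). One point worth recording: your final expression
\begin{equation*}
l_1=\frac{p\bigl(24(p-1)+5s^2+3s\sqrt{s^2+8(p-1)}\bigr)}{32(p-1)^{5/2}}
\end{equation*}
differs from the formula displayed in the paper, which repeats the $P_1$ expression with the cross term $-3s\sqrt{s^2+8(p-1)}$; since $l_1$ is proportional to $2b^2-3(p-1)$ with $b=-\tfrac14\bigl(s+3\sqrt{s^2+8(p-1)}\bigr)$ for $P_2$ (versus $b=\tfrac14\bigl(3\sqrt{s^2+8(p-1)}-s\bigr)$ for $P_1$), the cross term must indeed flip sign and your version is the correct one. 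Both expressions are positive for $s\geq 0$, $p>1$, so the stated conclusion of subcriticality is unaffected either way.
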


The proof is omitted, as it follows directly from arguments analogous to those used in the analysis of the Hopf bifurcation at $P_1$. The nature of the bifurcation is determined by computing the first Lyapunov coefficient, given by
 \begin{equation*}\label{l1P2}
l_1 =\dfrac{p\:\big(24(p-1)+5s^2-3s\sqrt{s^2+8(p-1)}\: \big)}{32(p-1)^{5/2}} > 0,
\end{equation*}
which gives that a subcritical Hopf bifurcation occurs at the $P_2$ for 
$p>1$.

 We remark that the Hopf bifurcations at both $P_1$ and $P_2$ consistently remain subcritical across the parameter ranges considered. This implies that the resulting limit cycles are unstable, leading to transient oscillatory behavior rather than sustained periodic dynamics near these equilibria.  

In parameter regimes where stable and unstable limit cycles coexist, this structure admits a clear physical interpretation. The coexistence reflects a regime of \textit{bistability} between different long-term climate states. The stable cycle represents a self-sustained glacial--interglacial oscillation, while the unstable one acts as a separatrix delimiting its basin of attraction. Consequently, small changes in atmospheric CO$_2$ or ice volume may shift the system between a steady climate state and a regime of recurring glacial cycles. This bistable structure provides a natural mechanism for abrupt transitions between climate states, as small parameter variations may induce rapid shifts between steady regimes and sustained oscillatory behavior.

Figure~\ref{centro_org1} illustrates the bifurcation structure of system~\eqref{MSNcero} in the ($p, r$) parameter plane for $s = 1$. The solid blue curve $e_0$ marks the set of parameter values where the equilibrium at the origin $P_0$ undergoes a Hopf bifurcation. As parameters vary along $e_0$, the criticality of the bifurcation changes from subcritical to supercritical at a point known as the generalized Hopf bifurcation (GH), also known as Bautin bifurcation \cite{Kuznetsov2023}. 
The stability regions of the equilibria $P_0$, $P_1$, and $P_2$  are described as follows: the origin $P_0$ is linearly stable within the region enclosed by the diagonal $r = p$, the curve $e_0$, and the $p$-axis. The equilibrium $P_1$ is stable in the region bounded by the diagonal, the green curve $e_2$, and the $r$-axis, losing stability through a subcritical Hopf bifurcation along $e_2$. Likewise, the equilibrium $P_2$ remains stable within the region enclosed by the red curve $e_1$ and the black dashed curve. The purple dashed curve corresponds to a saddle-node bifurcation, where the equilibria $P_1$ and $P_2$ coalesce, while the black dashed curve represents parameter values for which the equilibrium $P_2$ collides with the origin $P_0$.
The brown curve corresponds to the limit point of cycles (LPC) bifurcation, 
that is, the fold of periodic orbits emerging from the generalized Hopf point where 
$l_1=0$. This curve marks the boundary between parameter regions with one 
and two coexisting limit cycles, completing the classical generalized Hopf bifurcation 
scenario. In this sense, the LPC is the global continuation of the generalized 
Hopf (GH) point in the $(p,r)$-plane.

\begin{figure}[h!]
\begin{center} 
\includegraphics[scale=0.75]{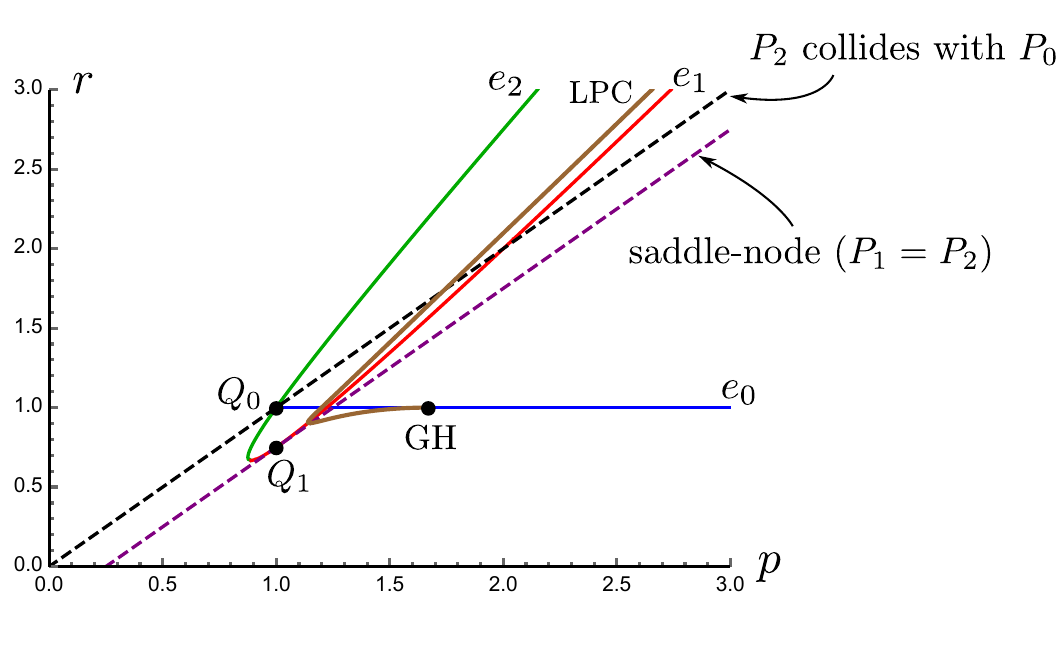} 
\end{center}
\caption{\scriptsize Bifurcation diagram of system \eqref{MSNcero} in the $(p,r)$-plane for $s=1$, showing the stability regions of the equilibria $P_0$, $P_1$, and $P_2$, together with the main local and global bifurcations.
The curves $e_0$ (blue), $e_1$ (red), and $e_2$ (green) correspond to Hopf bifurcation loci at the equilibria $P_0$, $P_2$, and $P_1$, respectively. The points $Q_0=(1,1)$ and $Q_1$ denote Takens--Bogdanov bifurcation points (organizing centers). The diagonal $r=p$, which passes through $Q_0$, separates regions with different numbers of equilibria. The purple dashed line represents the saddle-node bifurcation curve along which the equilibria $P_1$ and $P_2$ coalesce. The black dashed line corresponds to parameter values for which the equilibrium $P_2$ collides with the origin, while $P_1$ persists at $(s,-s)$.
The equilibrium $P_0$ is linearly stable in the region bounded by the diagonal $r=p$, the curve $e_0$, and the $p$-axis. The equilibria $P_1$ and $P_2$ are stable in the regions delimited by $e_2$ and $e_1$, respectively, together with the corresponding boundaries described above. In all cases, stability is lost through Hopf bifurcations along these curves.
The point GH on $e_0$ denotes a generalized Hopf (Bautin) bifurcation, where the associated Hopf bifurcation changes its criticality from subcritical to supercritical. The brown curve (LPC) corresponds to the limit point of cycles bifurcation, representing the fold of periodic orbits and delimiting the parameter region where two limit cycles (one stable and one unstable) coexist. The intersection of the curve $e_1$, the saddle-node curve, and the LPC curve corresponds to parameter values where multiple bifurcation conditions are simultaneously satisfied.}
\label{centro_org1}
\end{figure}

In order to numerically observe the limit cycles emanating from the generalized Hopf point, we performed a numerical exploration along the Hopf bifurcation curve $e_0$, that is, 
for $r = 1$ and fixed $s$, while varying $p$. This analysis enabled us to detect and characterize the emergence of limit cycles in the system. The resulting phase portraits, shown in Figure~\ref{fig-zoo1}, illustrate the qualitative dynamics near the bifurcation curve.
\begin{figure}[hbt]
\begin{center} 
\hspace{0.2cm}
\subfigure[\; $p = 1.4$, $r = 1$, $s=1$]{
\includegraphics[scale=0.55]{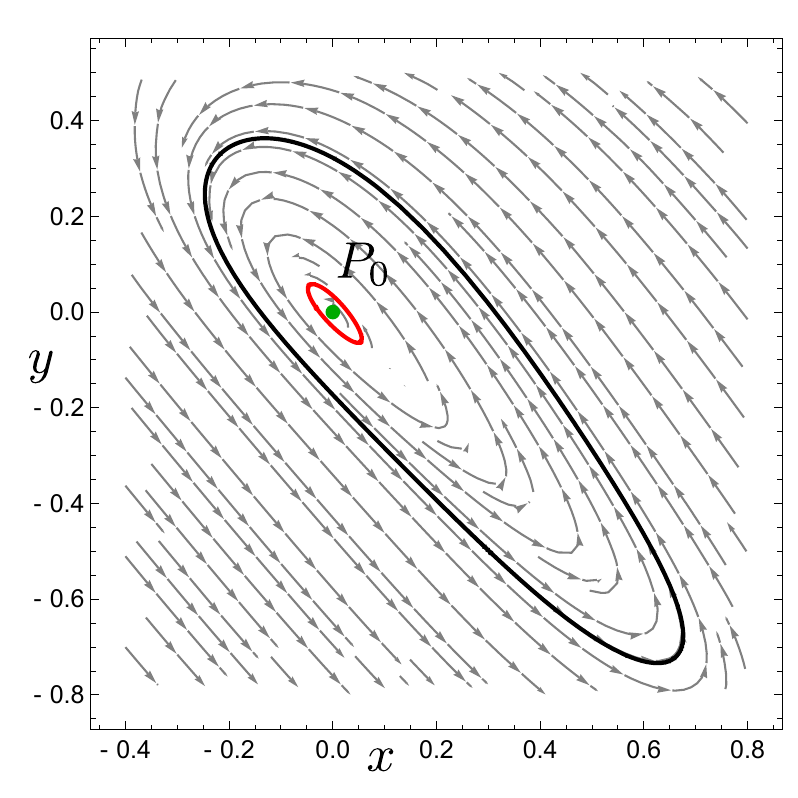}}
\hspace{0.2cm}
\subfigure[\; $p = 1.85$, $r = 1.1$, $s=1$]{
\includegraphics[scale=0.55]{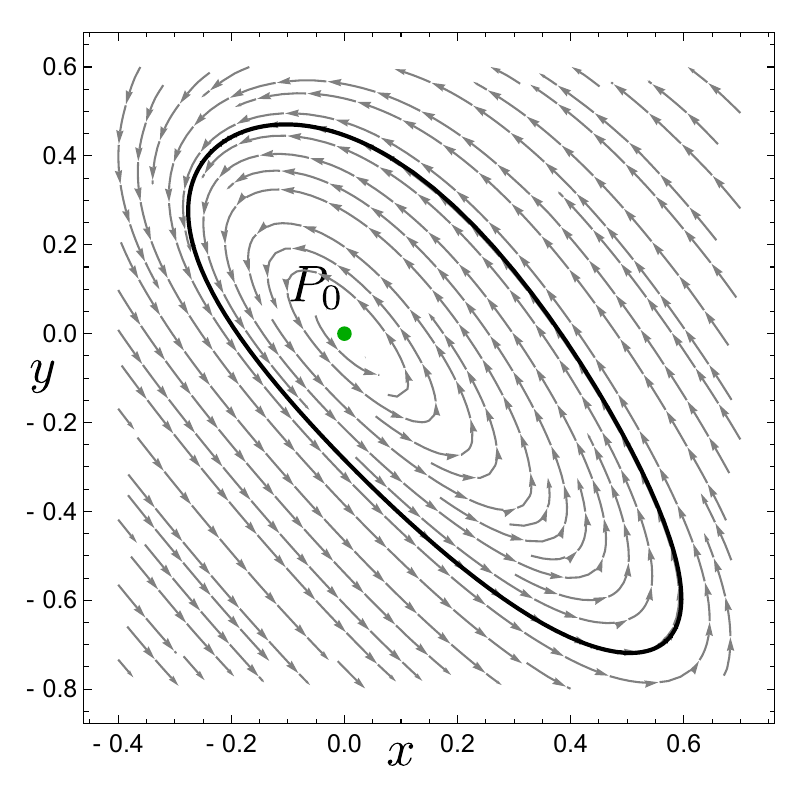}}
\hspace{0.2cm}
\end{center}
\caption{Phase portraits of system \eqref{MSNcero} in the $(x,y)$-plane, illustrating the emergence of different types of limit cycles along the curve $e_0$ in Fig.~\ref{centro_org1}. The green dot denotes the origin $P_0$. (a) two limit cycles, consisting of a stable one (black) and an unstable one (red) surrounding the origin; (b) a single stable limit cycle, with the unstable one having vanished.}\label{fig-zoo1}
\end{figure}

Before proceeding further, we recall that, in general, the existence of a homoclinic orbit implies the presence of oscillatory dynamics near the associated saddle point, due to the interaction between its stable and unstable manifolds \cite{Guckenheimer}. To illustrate this behavior in system \eqref{MSNcero}, we fixed representative parameter values and computed homoclinic trajectories using {\em Mathematica}. The computation involves matching the unstable manifold of one equilibrium point with the stable manifold of another to reconstruct the full homoclinic loop. The resulting homoclinic orbits are displayed in Figures~\ref{homp0}, \ref{homp2}, and \ref{homp1p2}.

In particular, point $P_1$ (highlighted in red) maintains consistent stability in different dynamical scenarios. As shown in Figures~\ref{homp0} and \ref{homp2}, it appears either as a stable focus or as a stable node, underscoring its robustness despite variations in the global phase portrait.
\begin{figure}[h!]
\begin{center} 
\includegraphics[scale=0.5]{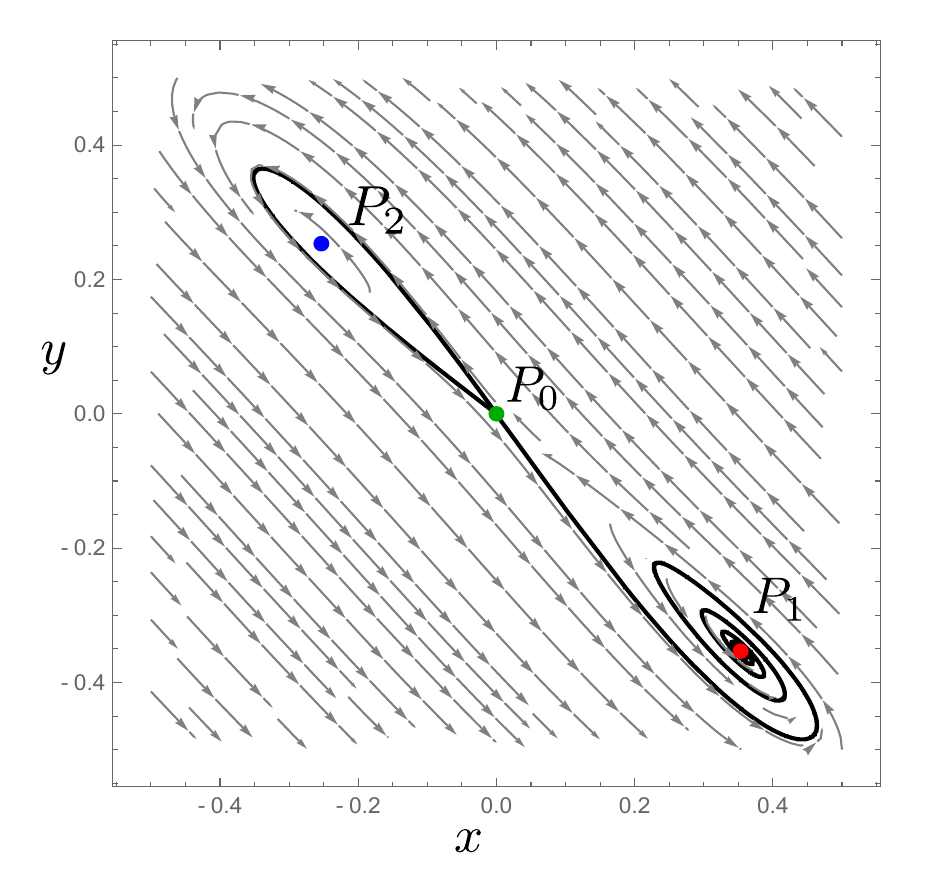} 
\end{center}
\caption{A homoclinic orbit of the system \eqref{MSNcero} emerging from the origin $P_0$ (green) and enclosing the equilibrium point $P_2$ (blue), while  $P_1$ (red) is a stable focus, for $p=1.1104755$, $r=1.2$ and $s=0.1$.}
\label{homp0}
\end{figure}

\begin{figure}
\begin{center}
\subfigure[\; Homoclinic orbit]{
\includegraphics[scale=0.46]{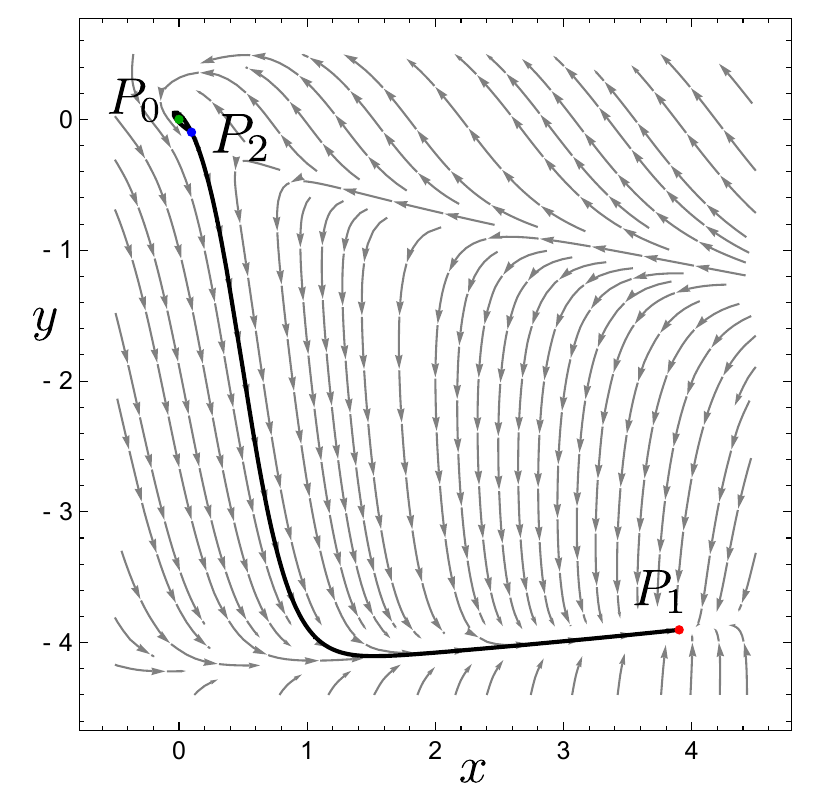}}\hspace{0.2cm}
\hspace{0.2cm}
\subfigure[\; Magnified view of Figure (a)]{
\includegraphics[scale=0.48]{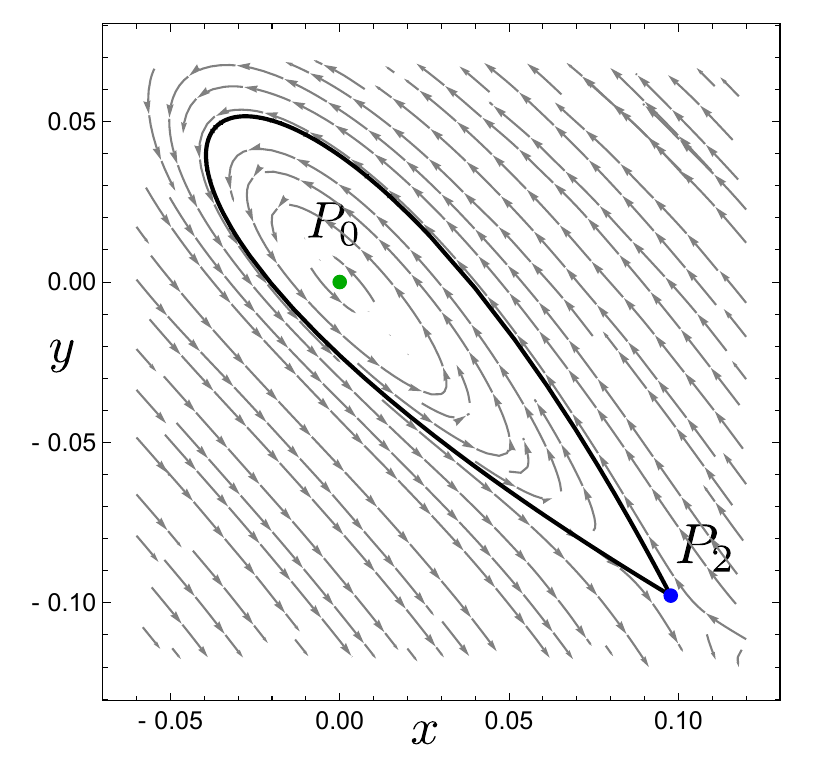}}
\end{center}
\caption{Homoclinic orbit of the system \eqref{MSNcero} emerging from the $P_2$ (blue) and enclosing the origin  $P_0$ (green), for $p=1.281409$, $r=0.9$ and $s=4$. The point 
$P_1$ (red) is a stable node.}
\label{homp2}
\end{figure}

\begin{figure}[h!]
\begin{center} 
\includegraphics[scale=0.6]{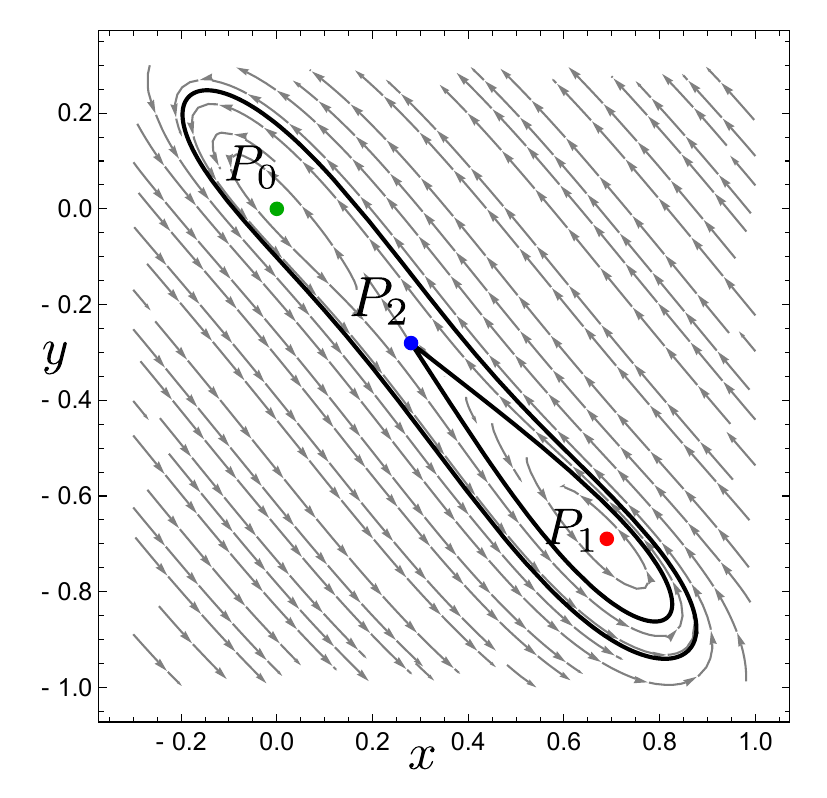} 
\end{center}
\caption{Homoclinic orbit of the system \eqref{MSNcero} emerging from the origin $P_2$ (blue) and enclosing the equilibrium point $P_1$ (red), while  the origin $P_0$ (green) is a unstable focus, for $p=1.20339372$, $r=1.01$ and $s=0.97$.}
\label{homp1p2}
\end{figure}
\newpage

\section{Bifurcation unfolding on the critical manifold}\label{sec_hamilton}
To analyze the onset and nature of the oscillatory regimes, we focus on the local unfolding of
the Hopf and generalized Hopf bifurcations that organize the dynamics on the critical manifold. As we detail below, a
change of variables and an appropriate scaling are introduced to express the system in a
canonical slow-fast form suitable for normal form analysis. This transformation is not merely
algebraic: it balances the characteristic time and amplitude scales of the ice, CO$_2$, and
ocean components, allowing the interaction among these subsystems to be described in
nondimensional units of comparable magnitude. Physically, the rescaled variables represent
deviations from the mean climate state rather than absolute quantities, and the scaling makes
the feedback structure between the slow and fast variables explicit. This approach provides a
clear physical interpretation while preserving the analytical tractability required to derive
the canonical unfolding of the system. Based on this local formulation, the following analysis
explores how these local bifurcations interact and organize the global dynamics of the model.

Before proceeding, we briefly comment on the role of the parameter $s$. The case $s=0$ is mathematically convenient, as it introduces a symmetry that simplifies the bifurcation analysis. However, this situation is not physically realistic, since nonlinear feedback mechanisms in the carbon cycle are expected to break this symmetry.

When $s \neq 0$, this symmetry is lost, leading to an asymmetric phase space structure and corresponding changes in the homoclinic orbits. This affects the robustness of abrupt climate transitions, as the system becomes more sensitive to parameter variations. In particular, asymmetric configurations can produce transitions with different amplitudes and durations, which provides a more realistic description of glacial--interglacial variability.

In what follows, we explore the global bifurcation structure of the Maasch-Saltzman model near two organizing centers in the $(p, r)$-parameter plane, where codimension-two Takens-Bogdanov bifurcations occur. In particular, we unfold the Takens-Bogdanov bifurcation at the origin $P_0$, which serves as a central point organizing local and global dynamics such as Hopf and saddle-node bifurcations, as well as homoclinic orbits. Unfolding it enables us to understand how periodic orbits emerge or disappear, and how their stability changes as system parameters vary. To analyze the persistence of the homoclinic orbits, we employ a Hamiltonian approximation together with Melnikov theory.

The linearization of system \eqref{MSNcero} at the origin $(0,0)$ for $p = r = 1$, and at the equilibria $P_1$ and $P_2$ for $p = 1$ and $r = 1 - \tfrac{1}{4}s^2$, reveals a non-semisimple double-zero eigenvalue: a zero eigenvalue with algebraic multiplicity two and geometric multiplicity one. This singularity corresponds to a codimension-two Takens-Bogdanov bifurcation. For each fixed value of the parameter $s$, these bifurcations define two organizing centers in the $(p,r)$-parameter plane, located at:
$$
Q_0 = (1,1), \qquad Q_1 = \left(1,1 - \tfrac{1}{4}s^2\right).
$$
These points serve as critical loci around which complex bifurcation structures emerge. As illustrated in Figure~\ref{centro_org1}, which corresponds to the case $s = 1$, the codimension-two bifurcation unfolds in the $(p,r)$-plane, highlighting the organizing centers $Q_0$ and $Q_1$ along with the branches of both local and global bifurcations of system \eqref{MSNcero}.

Since the Takens-Bogdanov bifurcation is the simplest (i.e., lowest codimension) bifurcation capable of generating homoclinic orbits, we proceed with the unfolding of the organizing center $Q_0$ in \eqref{MSNcero}. A similar analysis applies to the point $Q_1$, as it exhibits analogous local behavior.

Our analysis focuses on homoclinic orbits arising from an appropriate Hamiltonian system
and use Melnikov theory  to determine the parameter sets
for which these homoclinics persist under small perturbations (see \cite{Guckenheimer} and \cite{Perko}).
 This approach provides a deeper understanding of the global bifurcation structure near the organizing centers and reveals the mechanisms responsible for the emergence of oscillatory dynamics in the model.

To express the system in a canonical form suitable for the application of Melnikov theory, we introduce the change of variables $(x,-x-y)\to(x,y)$.  This transformation brings the system~\eqref{MSNcero} into a form in which the leading-order dynamics can be written as a Hamiltonian system, allowing the persistence of homoclinic orbits under perturbations to be analyzed.
From a physical perspective, this transformation reflects the approximate balance between the ice and deep-ocean components: when the ice mass anomaly $x$ increases, the ocean temperature anomaly $-x - y$ decreases, representing a compensating response in the oceanic heat reservoir. At the same time, it is introduced to rewrite the system in a canonical form suitable for Melnikov analysis, where the leading-order dynamics becomes Hamiltonian.

Rewriting the system in terms of $(x, y)$ isolates this negative feedback and yields a quasi-Hamiltonian structure that allows an explicit analysis of the homoclinic orbits and their persistence under perturbation.
Under this transformation, system~\eqref{MSNcero} becomes:
\begin{equation}
\label{homo}
\begin{split}
\dot{x}&=\: y,\\
\dot{y}&=\: (r-p)x+(r-1)y+s(x+y)^2-(x+y)^3.
\end{split}
\end{equation}

We proceed by introducing a time rescaling $\tilde{t} = \eta t$ and 
performing a change of variables $x(t) = \eta u(\tilde{t})$ and 
$y(t) = \eta^2 v(\tilde{t})$. To avoid introducing yet another time notation, we emphasize that from this point on, the overdot denotes 
differentiation with respect to $\tilde{t}$ rather than $t$. In parallel, 
we rescale the parameters as
\begin{equation}\label{param}
\delta = \frac{s}{\eta}, \quad \mu = \frac{r - p}{\eta^2},\quad \text{and}\quad  \lambda = \frac{r - 1}{\eta^2}.
\end{equation}
Under these transformations, system \eqref{homo} becomes equivalent to:
\begin{equation}
\label{hamhomo}
\begin{split}
\dot{u}&=\: v,\\
\dot{v}&=\: \mu u+\delta u^2-u^3+\eta(\lambda v+2\delta uv-3u^2v)+\eta^2(\delta v^2-3uv^2)-\eta^3 v^3.
\end{split}
\end{equation}

We emphasize that, although system \eqref{hamhomo} is not strictly Hamiltonian due to the presence of dissipative terms, it admits a Hamiltonian structure when $\eta=0$.
The dynamics are governed by Hamilton’s equations
\begin{equation}
\label{Ham}
\begin{split}
\dot{u}&=\: v,\\
\dot{v}&=\: \mu u +\delta u^2-u^3,
\end{split}
\end{equation}
with associated Hamiltonian function
\begin{equation}
\label{Hamm}
H(u,v)=\dfrac{1}{2}v^2-\dfrac{\mu}{2}u^2-\dfrac{\delta}{3}u^3+\dfrac{1}{4}u^4.
\end{equation}
We regard system \eqref{Ham} as the unperturbed counterpart of \eqref{hamhomo}. It admits three equilibrium points:  $p_0= (0,0)$, 
$$p_1 = \left(\tfrac{1}{2}(\delta + \sqrt{\delta^2 + 4 \mu}), 0\right) \quad  \text{and} \quad p_2 = \left(\tfrac{1}{2}(\delta - \sqrt{\delta^2 + 4\mu}), 0\right),$$ for $\delta^2 + 4\mu > 0$.  
Their linear stability is determined by evaluating the Jacobian matrix at each point and analyzing the corresponding eigenvalues. The results are summarized below.
\begin{proposition}Consider system \eqref{Ham}. The nature of its equilibrium points is characterized as follows:
\begin{enumerate}
\item[{1.} ] The origin is  a saddle point if $\mu >0$, and a centre if $\mu < 0$.
\item [{2.} ] The equilibrium point $p_1$ is a center if $\delta>0$ and $\mu+\frac{\delta^2}{4}>0$.
\item[{3.} ]  The equilibrium point $p_2$ is a saddle point if either $\delta >0$ and $-\frac{\delta^2}{4}<\mu<0$. It is a center if $\delta >0$ and $\mu>0$.
\end{enumerate}
\end{proposition}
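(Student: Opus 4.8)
The plan is to read off the eigenvalues of the linearization at each equilibrium and then use the Hamiltonian (equivalently, the time‑reversal) structure to upgrade the linear classification to a nonlinear one. Writing the second equation of \eqref{Ham} as $\dot v = g(u)$ with $g(u)=\mu u+\delta u^2-u^3$, the Jacobian at an equilibrium $(u^\ast,0)$ is
\[
J(u^\ast)=\begin{pmatrix} 0 & 1 \\ g'(u^\ast) & 0\end{pmatrix},\qquad g'(u)=\mu+2\delta u-3u^2,
\]
so the eigenvalues are $\lambda_{1,2}=\pm\sqrt{g'(u^\ast)}$. Hence $g'(u^\ast)>0$ gives a hyperbolic saddle, while $g'(u^\ast)<0$ gives a pair of purely imaginary eigenvalues. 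In the latter case $(u^\ast,0)$ is a nondegenerate critical point of the conserved function $H$ in \eqref{Hamm} — indeed $\nabla H(u^\ast,0)=0$ and the Hessian is $\mathrm{diag}(-g'(u^\ast),1)$, which is positive definite — so the level sets of $H$ near $(u^\ast,0)$ are closed curves and the equilibrium is a genuine (nonlinear) center. Equivalently, one notes that \eqref{Ham} is reversible under $(u,v,t)\mapsto(u,-v,-t)$, which already forces a center whenever the linearization has purely imaginary eigenvalues. Thus the whole proposition reduces to determining the sign of $g'(u^\ast)$ at the three equilibria.

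For the origin, $g'(0)=\mu$, which immediately yields statement 1. For $p_1$ and $p_2$ I would avoid substituting the radical into $g'$ directly; instead, since $u^\ast\neq 0$ solves $g(u)/u=\mu+\delta u-u^2=0$, we have $(u^\ast)^2=\delta u^\ast+\mu$, and substituting this collapses $g'$ to
\[
g'(u^\ast)=\mu+2\delta u^\ast-3(u^\ast)^2=-2\mu-\delta u^\ast .
\]
Setting $D:=\sqrt{\delta^2+4\mu}>0$, so that $2\mu=\tfrac12(D^2-\delta^2)$, and using $u_{1,2}^\ast=\tfrac12(\delta\pm D)$, a one‑line computation gives $g'(u_1^\ast)=-\tfrac12 D(D+\delta)$ and $g'(u_2^\ast)=-\tfrac12 D(D-\delta)$. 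With $\delta>0$ and $D>0$ the first is always negative, proving statement 2. The sign of the second is that of $\delta-D$: it is negative when $\mu>0$ (since then $D>\delta$), giving a center, and positive when $-\tfrac{\delta^2}{4}<\mu<0$ (so that $D$ is still real but $D<\delta$), giving a saddle — this is statement 3.

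I do not expect a serious obstacle. The only point that is not purely mechanical is the passage from "purely imaginary eigenvalues" to "genuine center", which is false for general planar vector fields but immediate here from either the first integral $H$ or the reversibility; I would make this explicit rather than leave it implicit. The rest is the elementary algebra of eliminating the radical via $(u^\ast)^2=\delta u^\ast+\mu$ and reading off signs, which the substitution $D=\sqrt{\delta^2+4\mu}$ keeps short.
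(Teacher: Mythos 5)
Your proposal is correct and follows essentially the same route as the paper, which classifies the equilibria by evaluating the Jacobian and reading off the eigenvalues; your algebraic elimination of the radical via $(u^\ast)^2=\delta u^\ast+\mu$ and the explicit use of the first integral $H$ (or reversibility) to upgrade "purely imaginary eigenvalues" to "genuine center" are welcome precisions, but not a different method.
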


We note that all equilibria exhibit degenerate behavior, characterized by a double-zero eigenvalue, when $\mu = 0$ (which corresponds to the condition $r = s$). Figure~\ref{fig-phase} illustrates the evolution of the phase portraits of system \eqref{Ham} as the parameters $\mu$ and $\delta$ vary, highlighting the qualitative bifurcation phenomena that emerge. In particular, we observe the presence of double-loop homoclinic orbits.

The emergence of homoclinic orbits in the Hamiltonian system is a direct consequence of the saddle nature of the origin when $\mu > 0$, along with the existence of appropriately positioned nearby centers, which facilitate the reconnection of stable and unstable manifolds. These global structures are clearly visible in the phase portraits and motivate a detailed analysis of their persistence under small non-Hamiltonian perturbations. This will be addressed in the following sections using Melnikov theory.
\begin{figure}[h!]
\begin{center} 
\subfigure[\; $\mu = 2$, $\delta = 4$]{
\includegraphics[scale=0.5]{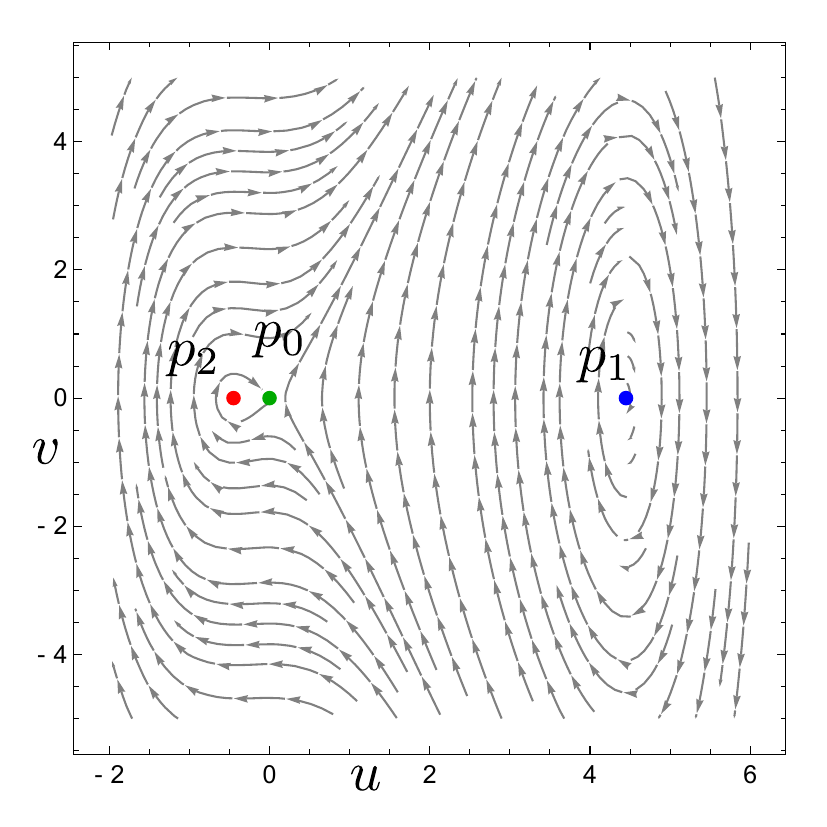}}\hspace{0.3cm}
\subfigure[\; $\mu = 0$, $\delta = 4$]{
\includegraphics[scale=0.5]{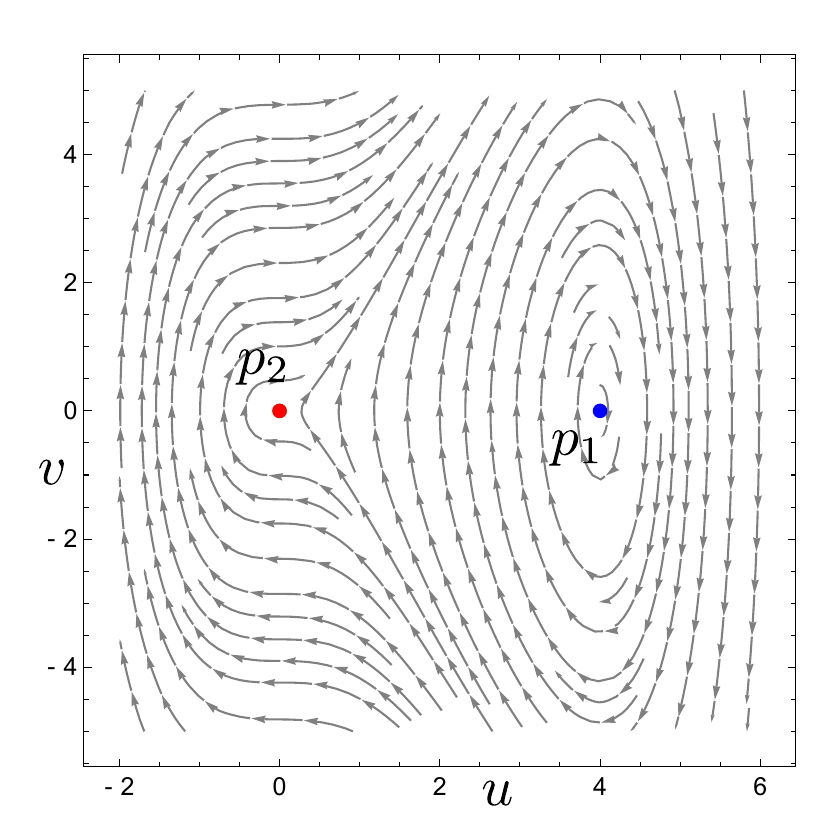}}
\end{center}
\caption{Phase portraits of the Hamiltonian system \eqref{Ham} for representative values of the parameters $\mu$ and $\delta$, illustrating the corresponding bifurcation scenarios in the $(u,v)$-plane. The equilibrium points $p_0$, $p_1$, and $p_2$ are shown in green, blue, and red, respectively.}\label{fig-phase}
\end{figure}

We proceed to investigate the effect of small perturbations on the persistence of homoclinic orbits to the origin in the Saltzman--Maasch climate model. These orbits, which connect a saddle equilibrium to itself, play a central role in capturing the nonlinear mechanisms underlying glacial--interglacial variability. In particular, they generate large-amplitude variations in the system’s state variables, modeling transitions between quasi-stable climate states. From a physical perspective, such transitions can be interpreted as abrupt shifts between climate states, driven by nonlinear feedback mechanisms in the system.

To assess the robustness of these structures, we consider the homoclinic orbits of the unperturbed Hamiltonian system \eqref{Hamm} and study their behavior under small non-Hamiltonian perturbations. Our approach is based on Melnikov theory (see Chapter 4 of \cite{Guckenheimer}), which provides a rigorous criterion for detecting transverse intersections between stable and unstable manifolds. The presence or absence of such intersections determines whether the homoclinic connection persists or breaks, thereby revealing the sensitivity of these global structures to perturbations.
\bigskip 

The unperturbed system \eqref{Hamm} admits two homoclinic orbits to the hyperbolic saddle at the origin, both lying on the level set $H(x, y) = 0$.  These homoclinic trajectories can be explicitly parametrized in time, and their analytic  expressions are given by
\begin{equation}\label{gam1}
\Gamma^+_0:  (u^+(\tilde{t}),v^+(\tilde{t})) = \left( -\frac{3\sqrt{2}\mu \alpha}{\sqrt{2}\delta\alpha-\sqrt{\mu}\cosh{(\sqrt{\mu}\:\tilde{t})}},-\frac{3\sqrt{2}\mu^2\alpha\sinh{(\sqrt{\mu}\:\tilde{t})}}{\left(\sqrt{2}\delta\alpha-\sqrt{\mu}\cosh{(\sqrt{\mu}\:\tilde{t})}\right)^2}\right),
\end{equation}
and
\begin{equation}\label{gam2}
\Gamma_0^-:  (u^-(\tilde{t}),v^-(\tilde{t})) = \left( -\frac{3\sqrt{2}\mu\alpha}{\sqrt{2}\delta\alpha+\sqrt{\mu}\cosh{(\sqrt{\mu}\:\tilde{t})}},\frac{3\sqrt{2}\mu^2\alpha \sinh{(\sqrt{\mu}\:\tilde{t})}}{\left(\sqrt{2}\delta\alpha+\sqrt{\mu}\cosh{(\sqrt{\mu}\:\tilde{t})}\right)^2} \right), 
\end{equation}
where $\alpha = \sqrt{\frac{\mu}{2\delta^2+9\mu}}$ with $\mu<-\frac{2\delta^2}{9}$ or $\mu>0$ and $\delta\geq 0$.

The phase portrait of \eqref{Ham} for $\mu =  2$ and $\delta= 4$, including the homoclinic orbits, is shown in Figure \ref{fig-phase2}. Henceforth, we shall refer to $\Gamma_0^-$ as the left loop and to the $\Gamma_0^+$ right loop.
\begin{figure}[hbt]
\centering
\subfigure[\; $\Gamma_0^-$ with $\mu = 2$ and  $\delta = 4$.]{
\includegraphics[scale=0.55]{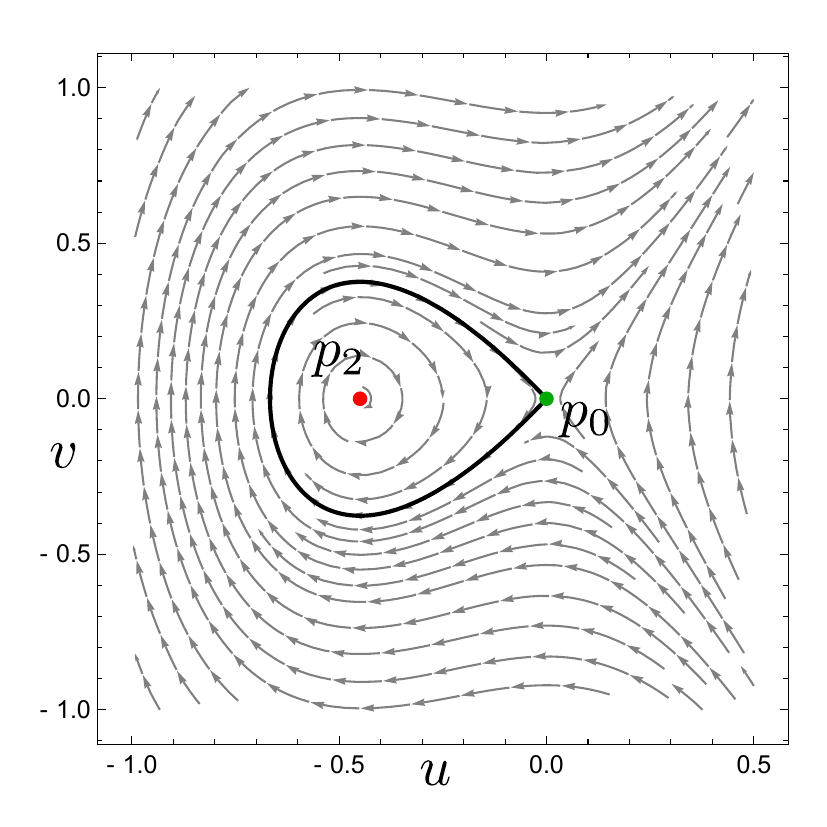}\hspace{0.8cm}}
\subfigure[\; $\Gamma_0^+$ with $\mu = 2$ and  $\delta = 4$.]{
\includegraphics[scale=0.55]{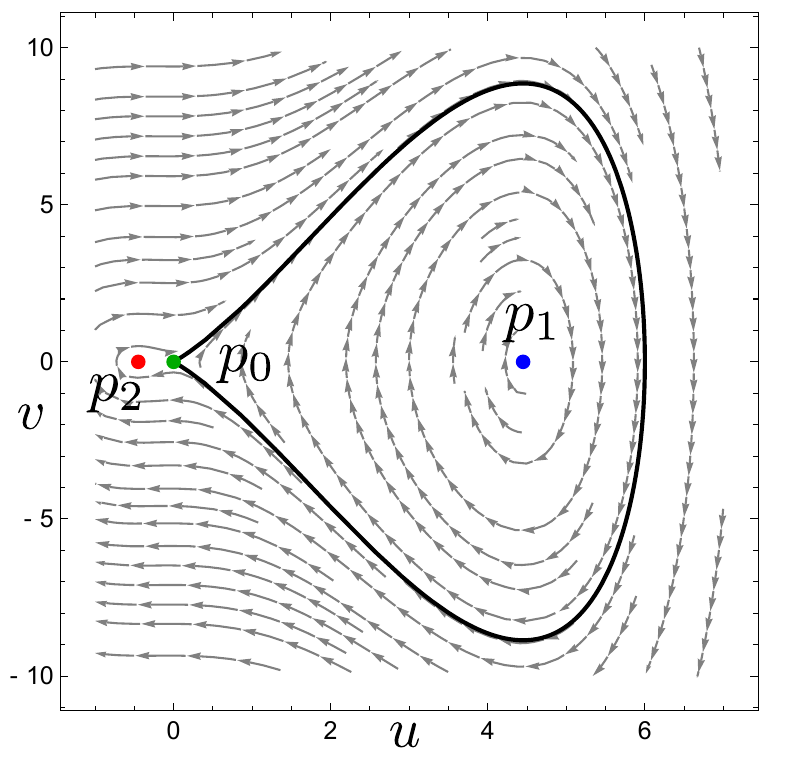}}
\caption{Phase portrait of the Hamiltonian system \eqref{Ham} in the $(u,v)$-plane.
It displays a homoclinic trajectory connected to a saddle point at the origin, enclosing the points $p_1$ and $p_2$, which are centers.}\label{fig-phase2}
\end{figure}

\begin{figure}[hbt]
\begin{center} 
\includegraphics[scale=0.8]{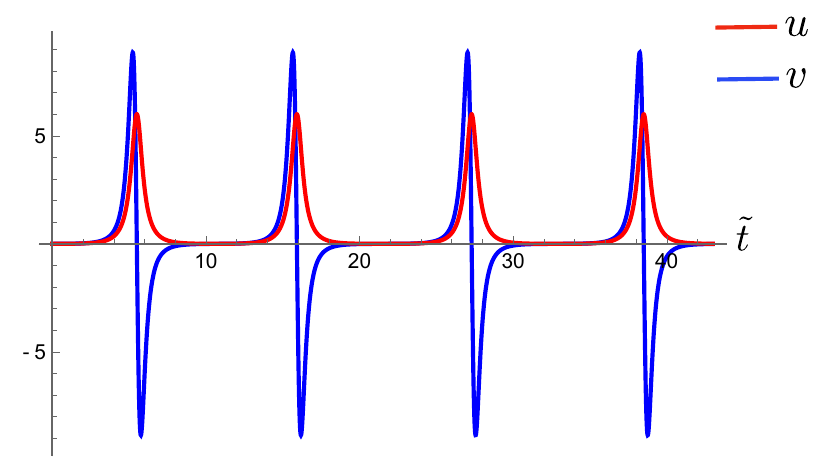} 
\end{center}
\caption{The time series corresponding to the homoclinic orbit shown in  \ref{fig-phase2}(a) display $u = u(\tilde{t})$ in red and $v = v(\tilde{t})$ in blue, both exhibiting oscillatory behavior.}
\label{Ser-hom}
\end{figure}

Once the homoclinic orbits have been parametrized, we proceed to compute the associated Melnikov function, which yields a first-order approximation to the splitting distance between the stable and unstable manifolds of the perturbed system. This analysis is crucial for determining the persistence of the homoclinic connection under small perturbations: the existence of a simple zero of the Melnikov function implies a transverse intersection of the perturbed invariant manifolds, thereby guaranteeing the survival of a homoclinic orbit in the non-autonomous (or perturbed) setting \eqref{hamhomo} for $0< \eta \ll 1$; see, e.g., \cite{Guckenheimer}.

To investigate whether the homoclinic orbits $\Gamma_0^+$  and $\Gamma_0^-$ persist under perturbation, we introduce a distance function $D(\delta, \mu, \lambda)$ that measures the separation between the stable and unstable manifolds in the perturbed system. This function is constructed with reference to the level sets of the Hamiltonian $H$. As a first step, we recast system \eqref{hamhomo} into the form of a perturbed Hamiltonian system:
\begin{equation}\label{Hamestr}
\begin{array}{lll}
\begin{pmatrix}
\dot{u}\\
\dot{v}\\
\end{pmatrix} &=&
\begin{pmatrix}
v\\
\mu u+\delta u^2-u^3\\
\end{pmatrix} + \eta
\begin{pmatrix}
0\\
v(\lambda+2\delta u-3u^2)
\end{pmatrix} + \mathcal{O}(\eta^2) \vspace{0.3cm}\\
&=& J\nabla H+\eta X+\mathcal{O}(\eta^2), 
\end{array}
\end{equation}
where 
$J=\begin{pmatrix}
0 & 1\\
-1 & 0\\
\end{pmatrix}$.

The leading-order behavior of the splitting between the perturbed stable and unstable manifolds is captured by an asymptotic expansion in powers of $\eta$, describing the infinitesimal deviation induced by the perturbation:
 $$D^\pm(\delta,\mu,\lambda)=\eta M^\pm(\delta,\mu, \lambda)+\mathcal{O}(\eta^2),$$ 
  where the is Melnikov function   given by
 \begin{equation}\label{mell1}
 M^\pm(\delta,\mu, \lambda)=\int_{-\infty}^{\infty}{\nabla H\cdot X(\Gamma_0^\pm(\tilde{t}))ds}.
 \end{equation}

Using \eqref{Hamm}, \eqref{Hamestr}, and the expressions for $\Gamma_0$ given in \eqref{gam1} and \eqref{gam2}, we obtain the Melnikov function
\[
M^\pm(\delta,\mu,\lambda)
= \lambda I_0^\pm(\delta,\mu) + 2\delta I_1^\pm(\delta,\mu) - 3 I_2^\pm(\delta,\mu),
\]
where the integrals $I_0^\pm$, $I_1^\pm$, and $I_2^\pm$ are defined in Appendix~\ref{ap_melnikov}.

We stress that the presence of simple zeros in the Melnikov function implies a transverse intersection between the stable and unstable manifolds, thereby ensuring the persistence of the homoclinic orbit under perturbation. In this context, if $\eta > 0$ is sufficiently small, the homoclinic orbit $\Gamma_0^\pm$ to the origin $p_0$ persists for parameter values such that the equation $D^\pm(\delta,\mu,\lambda)=0$ admits simple zeros.
The corresponding leading-order persistence condition is therefore given by
\[
\lambda = \lambda_0^\pm(\delta,\mu)
= \dfrac{3I_2^\pm (\delta,\mu)-2\delta I_1^\pm(\delta,\mu)}{I_0^\pm(\delta,\mu)}
+ \mathcal O(\eta),
\]
which provides an explicit relation between the parameters $(\delta,\mu,\lambda)$ ensuring the survival of the homoclinic connection under perturbations.

At leading order, the condition $D^\pm(\delta,\mu,\lambda)=0$ is equivalent to $M^\pm(\delta,\mu,\lambda)=0$, and therefore determines the persistence of the homoclinic orbits.

The leading-order approximation in $\eta$ for the persistence of the homoclinic orbit referred to as the left loop, expressed explicitly in terms of the parameters $\delta$ and $\mu$, is given by
\begin{align}
    \lambda=\lambda^-& =\frac{5 \sqrt{2} \delta  \left(2 \delta ^2+9 \mu \right)^2\text{arctan}(\alpha )+3 \sqrt{\mu } \left(10 \delta ^4+75 \delta ^2 \mu +108 \mu ^2\right)}{15 \left(\sqrt{2} \delta   \left(2 \delta ^2+9 \mu \right)\text{arctan}(\alpha )+3 \sqrt{\mu } \left(\delta ^2+3 \mu \right)\right)},
\end{align}
where $\alpha=\frac{\sqrt{2}\:\delta-\sqrt{2\delta^2+9\mu}}{3\sqrt{\mu}}$.

Figure \ref{persistencia3par} displays the surface defined by $D^-(\delta, \mu, \lambda) = 0$, providing numerical evidence for the existence of homoclinic bifurcation curves associated with the equilibrium point $p_0$. To further illustrate the persistence of the homoclinic orbit in system \eqref{hamhomo}, we selected representative parameter values from this surface: $\delta = 4$, $\mu = 2$, 
$\lambda = 3.6037$, and 
$\eta = 0.3$, and numerically reconstructed the corresponding homoclinic trajectory. The resulting phase portrait, shown in Figure \ref{retratos}, depicts the orbit in the
$(u,v)$-plane.
To verify that this homoclinic orbit also persists in the original system \eqref{MSNcero}, we used the equivalent parameter values $p = 1.4431$, $r = 1.32431$, and $s = 1.2$. The resulting trajectory confirms the existence of the homoclinic orbit in the original variables and is shown in Figure \ref{homoxy}.
\begin{figure}[h!]
\centering
{\includegraphics[scale=0.6]{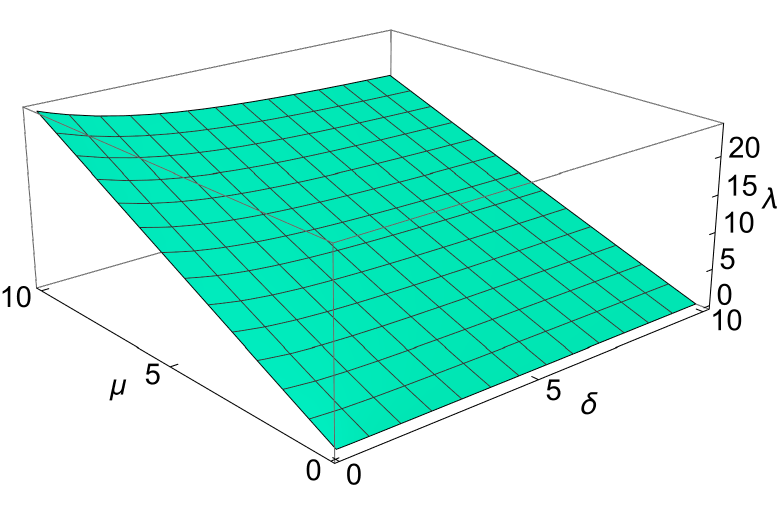}}
\caption{Plot of the surface $D^-(\delta, \mu, \lambda) = 0$, which corresponds to the set of parameter values for which the stable and unstable manifolds intersect after perturbation of the homoclinic orbit $\Gamma_0^-$, indicating its persistence under the perturbation.}\label{persistencia3par}
\end{figure}

\begin{figure}[h!]
\centering
\subfigure[\; $\eta=0$]{
\includegraphics[scale=0.5]{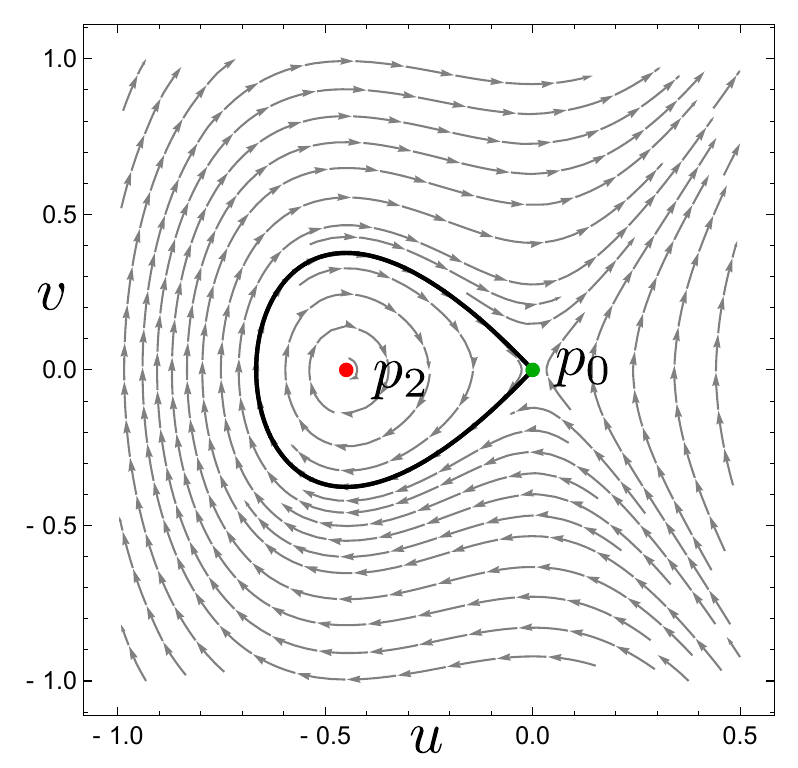}\hspace{0.8cm}}
\subfigure[ \; $\eta=0.3$]{
\includegraphics[scale=0.5]{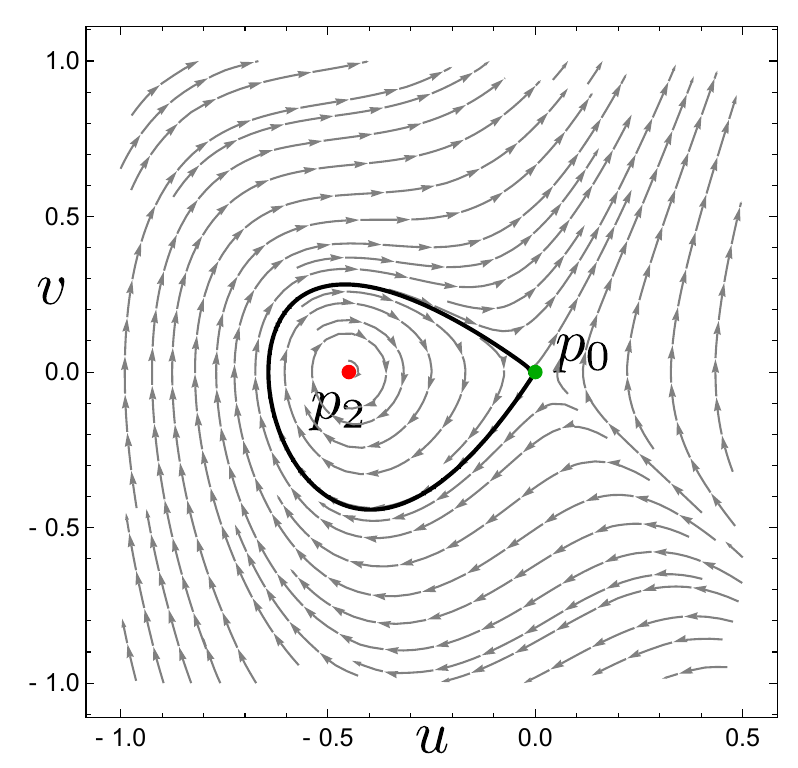}}
\caption{The homoclinic orbit $\Gamma_0^-$ of the Hamiltonian system~\eqref{Ham} persists in the perturbed system~\eqref{hamhomo}, illustrated in the $(u,v)$-plane for parameters $\delta = 4$, $\mu = 2$, and $\lambda = 3.6037$. 
(a) For $(\eta = 0)$ (unperturbed case), given by 
$\Gamma_0^- = \left( -\frac{6}{5 \cosh(\sqrt{2} t)+4}, \frac{30\sqrt{2} \sinh(\sqrt{2} \tilde{t})}{(5 \cosh(\sqrt{2} t)+4)^2} \right)$, where the equilibrium point $p_2$ enclosed by the loop is a center; (b) Numerical solution of the perturbed system \eqref{hamhomo} for $\eta = 0.3$, where $p_2$ becomes asymptotically stable.}\label{retratos}
\end{figure}

\begin{figure}[h!]
\begin{center} 
\includegraphics[scale=0.6]{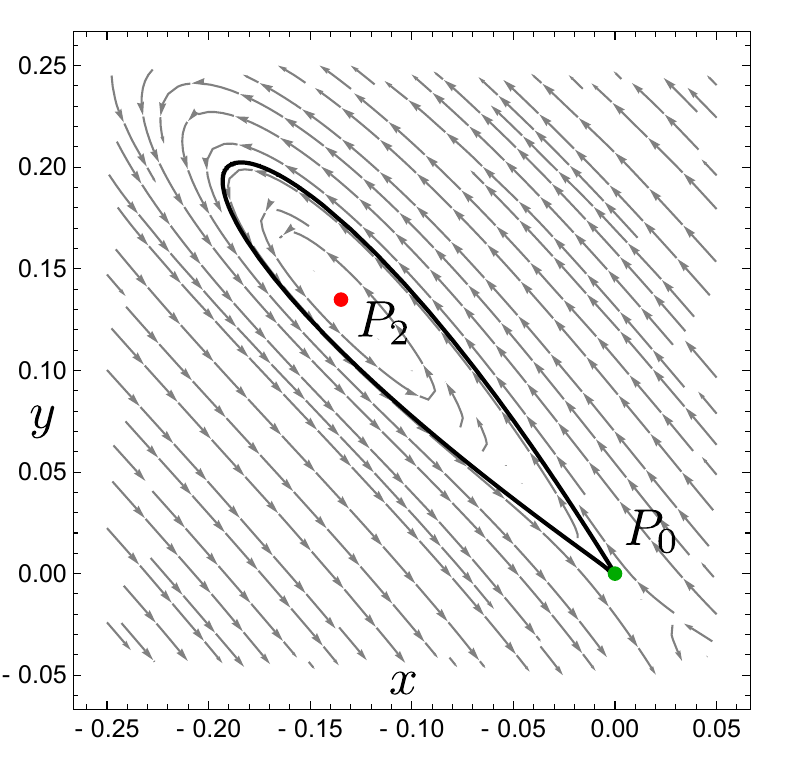} 
\end{center}
\caption{The phase portrait in the $(x,y)$-plane shows the homoclinic orbit of system \eqref{MSNcero} for the parameter values $p = 1.4431$, $r = 1.32431$, and $s = 1.2$. This trajectory corresponds to the orbit shown in Figure~\ref{retratos}(b), represented there in transformed coordinates. The equilibrium points $p_0$ and $p_2$ are shown
in green and red, respectively.}
\label{homoxy}
\end{figure}

We now turn to the parameter regimes defined by $\delta$ and $\mu$ that ensure the persistence of the homoclinic orbit $\Gamma_0^+$, which occurs, to first order in 
$\eta$, for the values of $\lambda$ given by
$$
\lambda = \lambda^+=\frac{5 \sqrt{2} \delta  \left(2 \delta ^2+9 \mu \right)^2\text{arctan}(\beta )+3 \sqrt{\mu } \left(10 \delta ^4+75 \delta ^2 \mu +108 \mu ^2\right)}{15 \left(\sqrt{2} \delta   \left(2 \delta ^2+9 \mu \right)\text{arctan}(\beta )+3 \sqrt{\mu } \left(\delta ^2+3 \mu \right)\right)},
$$
where $\beta=\frac{\sqrt{2}\delta+\sqrt{2\delta^2+9\mu}}{3\sqrt{\mu}}$. The corresponding graph is shown in Figure~\ref{persistencia3parder}.
\begin{figure}[hbt]
\centering
{\includegraphics[scale=0.6]{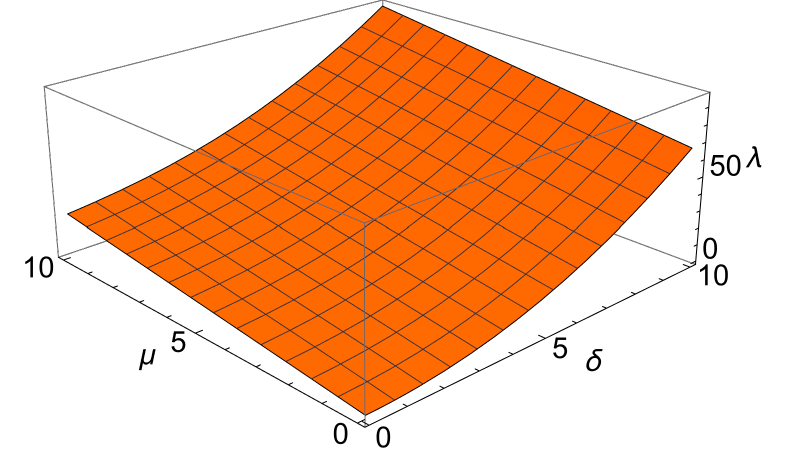}}
\caption{Plot of the surface $D^+(\delta,\mu,\lambda) = 0$, representing the set of parameter values for which the stable and unstable manifolds intersect 
 following the perturbation of the homoclinic orbit $\Gamma_0^+$, thereby indicating its persistence.}\label{persistencia3parder}
\end{figure}

As established in \cite{camassa}, Melnikov’s multi-pulse theory states that, to leading order, the Melnikov function associated with a double-loop homoclinic orbit can be approximated by the sum of the contributions from each individual single-loop connection. The passage near the saddle point contributes only higher-order correction terms. This approximation implies that the bifurcation condition $D^\pm(\delta, \mu, \lambda) = 0$, which determines the persistence of large-amplitude homoclinic connections to $p_0$, reduces to an algebraic combination of the single-pulse Melnikov functions. 

We distinguish between single-loop and double-loop homoclinic orbits. The single-loop orbits $\Gamma_0^\pm$ correspond to individual homoclinic connections associated with a single large excursion in phase space. In contrast, double-loop homoclinic configurations arise from the concatenation of two such elementary pulses, leading to more complex global trajectories. From a climate perspective, single-loop orbits can be interpreted as isolated large-amplitude transitions between quasi-stable climate states, whereas double-loop structures represent sequences of such transitions, capturing more intricate patterns of glacial--interglacial variability.

As a result, this formulation provides a practical method for identifying parameter regimes in which the double-loop orbit persists under small dissipative perturbations characterized by the system parameters.

In our case, the parameter values $\mu$, $\delta$, and $\lambda$ for which the homoclinic orbit persists under small perturbations are determined, to first order in $\eta$, by
\begin{align*}
\lambda^\cup (\mu,\delta) &= \dfrac{3I_2^- -2\delta I_1^- +
3I_2^+ -2\delta I_1^+
}{I_0^- + I_0^+}\\
& = \frac{2 \left(5 \sqrt{2} A \delta  \left(2 \delta ^2+3 \mu \right) \left(2 \delta ^2+9 \mu \right)+6 \sqrt{\mu } \left(10 \delta ^4+45 \delta ^2 \mu +18 \mu ^2\right)\right)}{5 \sqrt{2} A \delta  \left(2 \delta ^2+9 \mu \right)+30 \sqrt{\mu } \left(\delta ^2+3 \mu \right)},
\end{align*}
 where $A=\text{arctan}\left(\frac{\sqrt{2}\delta +\sqrt{2\delta^2+9\mu}}{3\sqrt{\mu}}\right)+\text{arctan}\left(\frac{\sqrt{2}\delta -\sqrt{2\delta^2+9\mu}}{3\sqrt{\mu}}\right)$. This surface is shown in Figure \ref{homo2doble}.

\begin{figure}[h!]
\centering
{\includegraphics[scale=0.5]{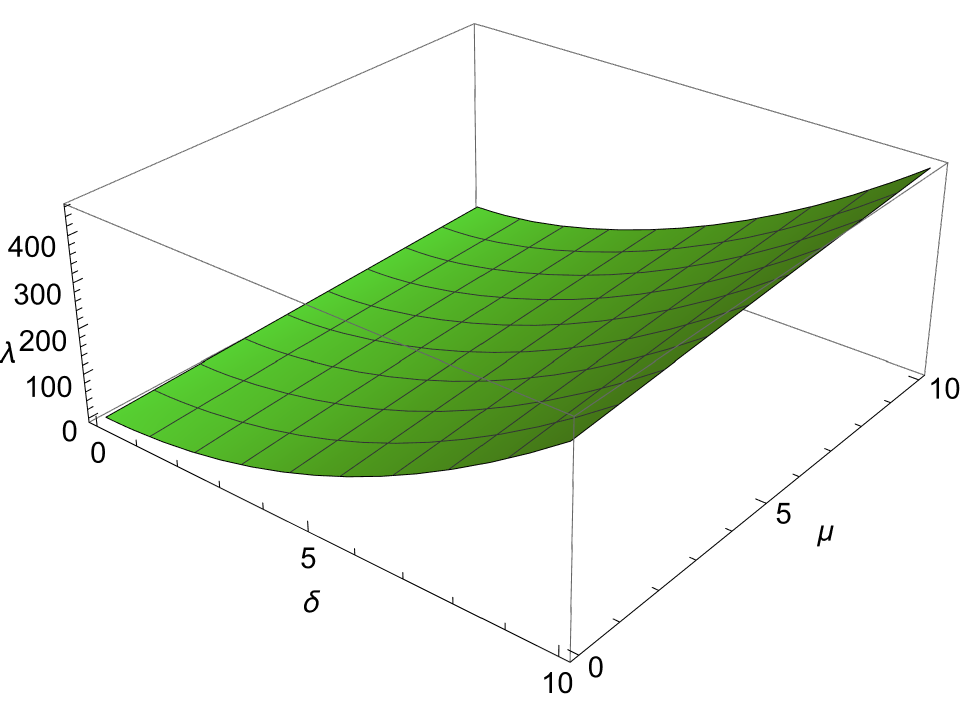}}
\caption{Plot of the surface $\lambda^{\cup}(\mu, \delta)$ indicating parameter regimes where double-loop homoclinic orbits to the origin persist, as predicted by the Melnikov function.}
\label{homo2doble}
\end{figure}

\begin{figure}[h!]
\centering
\hspace{0.4cm}
\subfigure[\; $\eta=5\times 10^{-8}$]{
\includegraphics[scale=0.5]{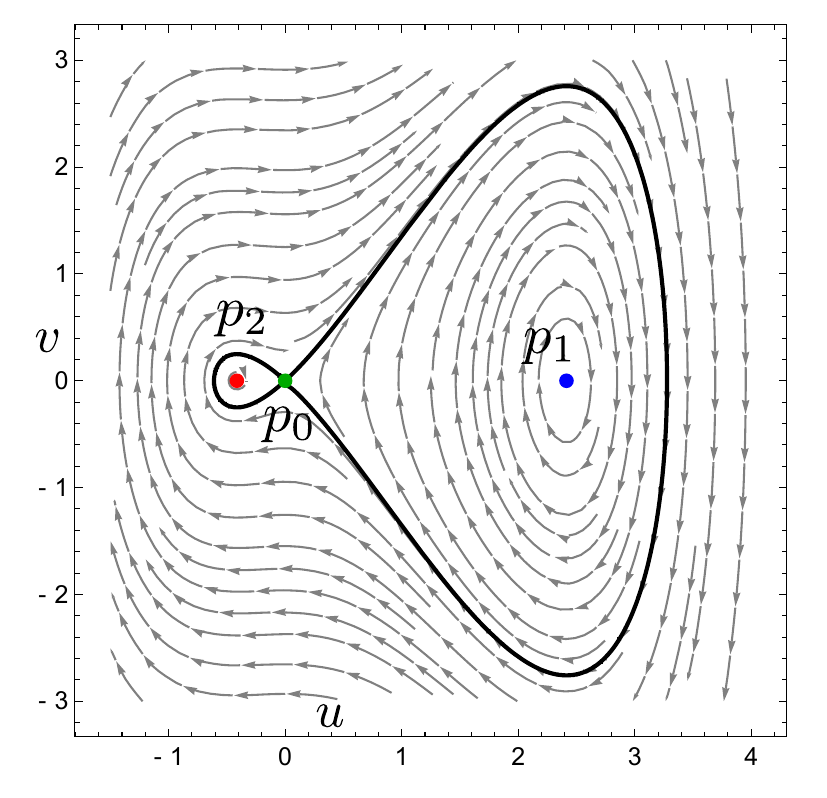}}
\hspace{0.4cm}
\subfigure[\; $\eta=5\times 10^{-7}$]{
\includegraphics[scale=0.5]{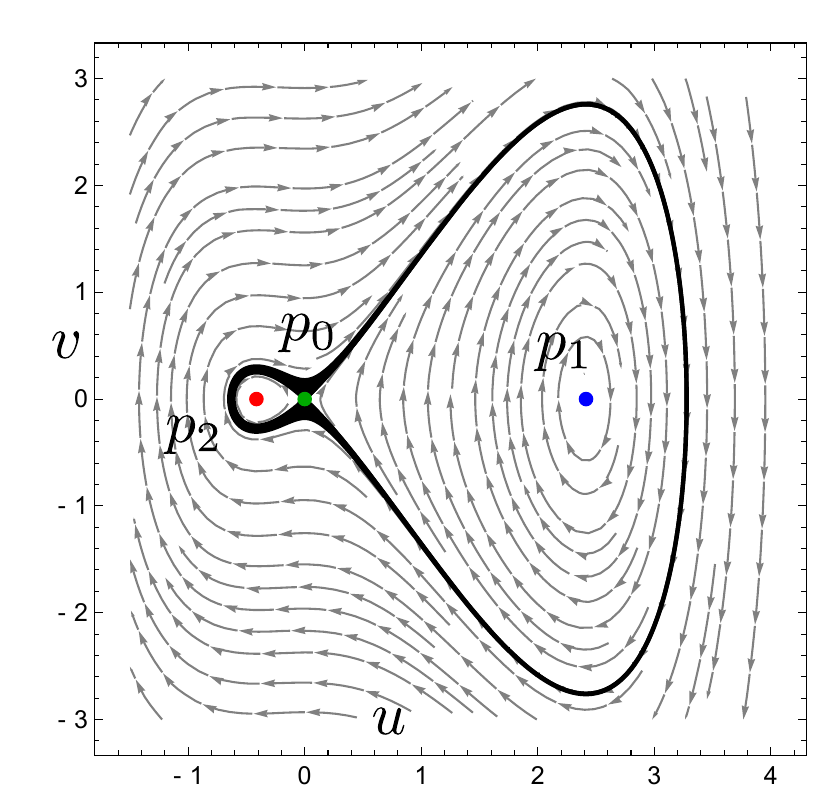}}
\caption{(a) Phase portrait of system \eqref{hamhomo} in the $(u,v)$-plane showing the persistence of the a double-loop homoclinic orbit for the parameter values $\delta = 2$, $\mu = 1$, and $\lambda = 21.1719$.
(b) The homoclinic orbit breaks under a perturbation in the parameter $\eta$, revealing the system's sensitivity to changes in this parameter.}
\label{homodoblepers}
\end{figure}

\section{Asymmetric Homoclinic Orbits for $s \neq 0$}\label{sub_sn0}
When the symmetry condition $s = 0$ is relaxed, the reflection invariance of the system is
broken and the dynamics become asymmetric. In this case, the invariant manifolds of the
saddle equilibria no longer coincide, and the homoclinic connections are deformed, giving rise
to a single asymmetric loop. Although the Hamiltonian structure is lost, numerical
continuation shows that a perturbed homoclinic trajectory persists for moderate values of $s$.
This orbit encloses one of the nontrivial equilibria and organizes the nearby phase portrait,
producing large-amplitude oscillations that connect distinct regions of the slow manifold.

From a dynamical perspective, the persistence of this asymmetric homoclinic orbit indicates
that global connections continue to govern transitions between coexisting quasi–steady regimes.
Within the anomaly framework adopted in this work, these transitions represent large
excursions in the system's state variables rather than literal physical changes such as ice
loss or accumulation. This interpretation preserves the physical consistency of the model
while emphasizing the geometric mechanisms responsible for the organization of the global
phase space.

\begin{figure}[ht!]
\begin{center} 
\subfigure[\; $p = 1.0165$, $r = 1.0215$, $s=0.1$]{
\includegraphics[scale=0.5]{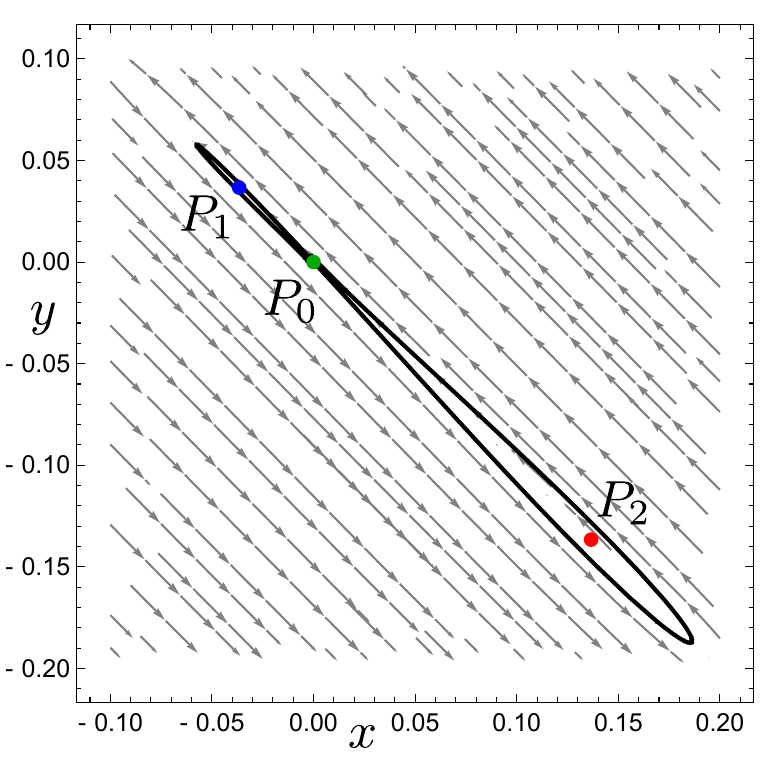}}\hspace{0.3cm}
\hspace{0.4cm}
\subfigure[\; $p = 1.059$, $r = 1.079$, $s=0.2$]{
\includegraphics[scale=0.5]{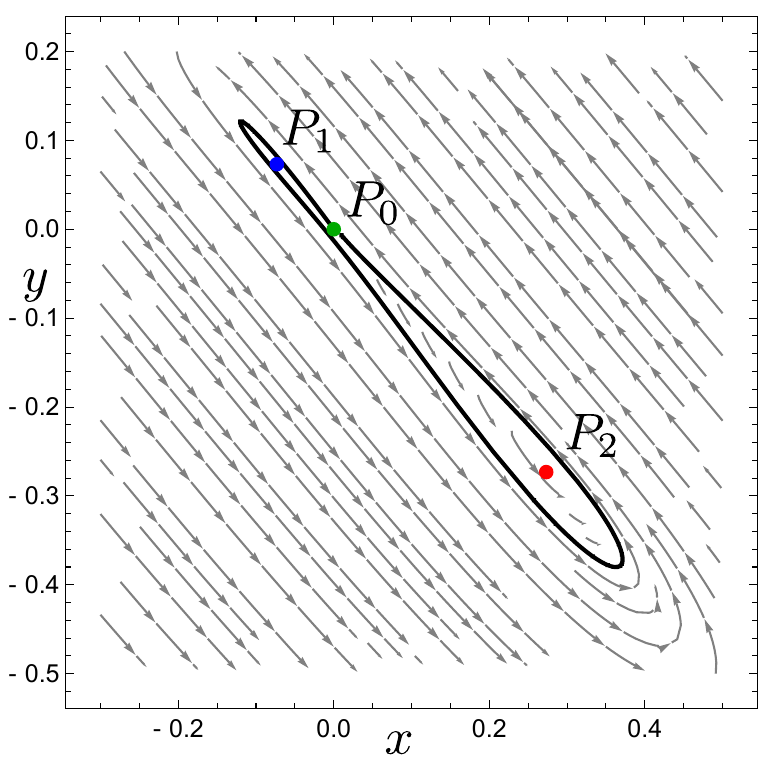}}
\hspace{0.5cm}
\subfigure[\; $p = 1.24695$, $r = 1.32695$, $s=0.4$]{
\includegraphics[scale=0.5]{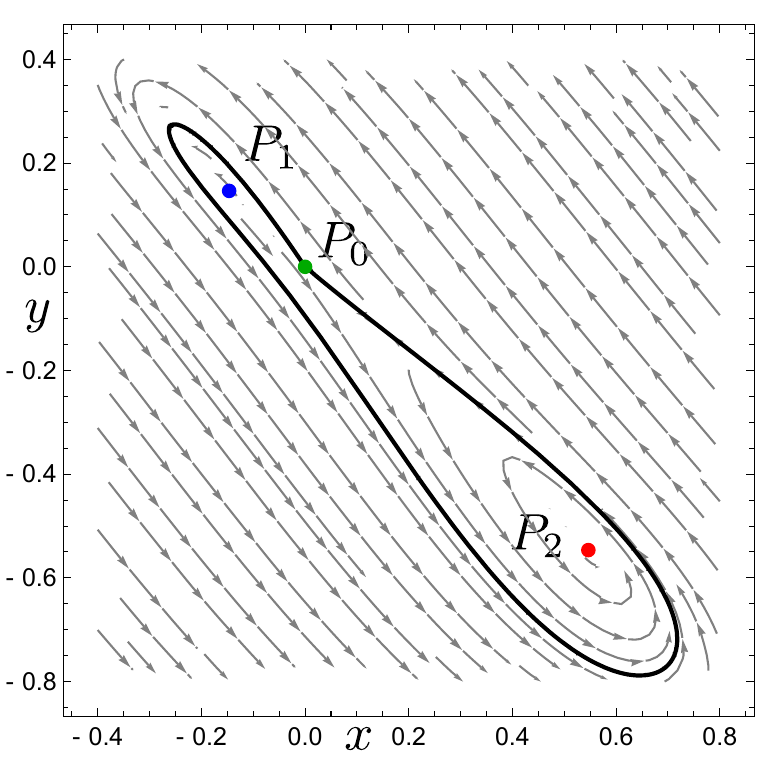}}
\hspace{0.6cm}
\subfigure[\; $p = 1.614834895$, $r= 1.794834895$, $s=0.6$]{
\includegraphics[scale=0.5]{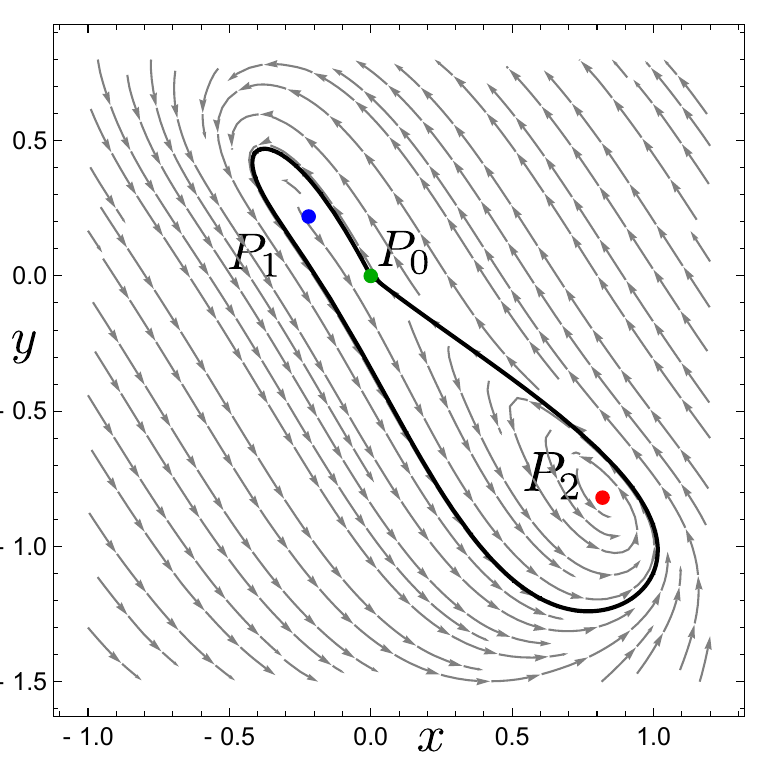}}
\end{center}
\caption{Phase portraits of the  system \eqref{MSNcero} in the $(x,y)$-plane  for representative values of the parameters $p$, $r$ and $s=0.1,0.2,0.4$ and $0.6$ illustrating  the existence of large-amplitude homoclinic orbits. The equilibrium points $P_0$, $P_1$, and $P_2$ are shown in green, blue, and red, respectively.}\label{fig-per-dob}
\end{figure}

In view of the preceding analysis, we now examine the persistence and bifurcation structure of homoclinic orbits in the singular limit $s \to 0$, where the system becomes $\mathbb{Z}_2$-symmetric. While this limit does not correspond to a physically meaningful configuration, it serves as a mathematically well-posed approximation that reveals the organizing structure of the dynamics.

\section{Symmetric homoclinic bifurcation in the limit $s = 0$}\label{sub_s0}
We now focus on the symmetric case $s = 0$.
This regime offers a simplified yet insightful framework in which the existence and structure of symmetric homoclinic connections can be analyzed rigorously. In particular, the presence of $\mathbb{Z}_2$ symmetry facilitates a more tractable analytical approach and highlights the role of symmetry in shaping the global phase portrait.

By taking the limit $s \to 0$ in system~\eqref{homo}, we obtain the reduced system:
\begin{equation}
\label{homo1}
\begin{split}
\dot{x} &= y,\\
\dot{y} &= (r - p)\,x + (r - 1)\,y - (x + y)^3,
\end{split}
\end{equation}

System \eqref{hamhomo} then transforms into
\begin{equation}
\label{homo2}
\begin{split}
\dot{u} &= v,\\
\dot{v} &= \mu u - u^3 + \eta(\lambda v - 3u^2v) - 3\eta^2 uv^2 - \eta^3 v^3,
\end{split}
\end{equation}
which becomes the Hamiltonian system
\begin{equation}
\label{homo3}
\begin{split}
\dot{u} &= v,\\
\dot{v} &= u - u^3,
\end{split}
\end{equation}
without loss of generality, we assume $\mu=1$ and set $\eta=0$, with associated Hamiltonian function
\begin{equation}
\label{ham1}   
H(u,v) = \frac{1}{2}v^2 - \frac{1}{2}u^2 + \frac{1}{4}u^4.
\end{equation}

The equilibrium points \( P_{1,2} = (\pm 1, 0) \) are local minima of \( H(u,v) \), whereas \( P_0 = (0, 0) \) is a saddle point. On the level curve defined by \( H_0 = H(u,v) = 0 \), there exists a pair of symmetric homoclinic orbits \( \Gamma^{\pm}_0 \),
$$
\Gamma^{\pm}_0:\: \left(u_0^{\pm}(\tilde{t}), v_0^{\pm}(\tilde{t})\right) = \left(\pm \sqrt{2}\,\text{sech}(\tilde{t}), \mp\,\text{sech}(\tilde{t})\,\text{tanh}(\tilde{t})\right).
$$

We now consider the perturbed Hamiltonian system \eqref{homo2}, written as
\begin{equation}\label{Hamhomo}
\begin{pmatrix}
\dot{u}\\
\dot{v}
\end{pmatrix}
=
\begin{pmatrix}
v\\
u - u^3
\end{pmatrix}
+ \eta
\begin{pmatrix}
0\\
\lambda v - 3u^2v
\end{pmatrix}
+ \mathcal{O}(\eta^2).
\end{equation}

Using the Melnikov method, we obtain the asymptotic expansion
\begin{equation}
\label{melnikov}
D^{\pm}(\lambda) = \eta M(\lambda) + \mathcal{O}(\eta^2),
\end{equation}
where the Melnikov function is given by
\[
M^{\pm}(\lambda) = \int_{-\infty}^{\infty} \left(\lambda - 3(u_0^{\pm}(t))^2\right)\left(v_0^{\pm}(t)\right)^2 d\tilde{t}.
\]
Evaluating this expression, we find that \( M^{\pm}(\lambda) = 0 \) when
\[
\lambda = \frac{3 \int_{-\infty}^{\infty} (u_0^{\pm}(\tilde{t})v_0^{\pm}(\tilde{t}))^2 d\tilde{t}}{\int_{-\infty}^{\infty} (v_0^{\pm}(\tilde{t}))^2 d\tilde{t}} = \frac{12}{5}.
\]

The persistence of double-loop homoclinic orbits is illustrated in Figures \ref{retratosdoble} and \ref{homoxydoble} in the $(u, v)$ and $(x, y)$ coordinates, respectively, for parameter values $\delta = 0$, $\mu = 1$, and $\lambda = 2.40378$, which correspond to $s = 0$, $p = 1.01404$, and $r = 1.02404$.
\begin{figure}[h!]
\centering
\subfigure[\; $\eta=0$]{
\includegraphics[scale=0.5]{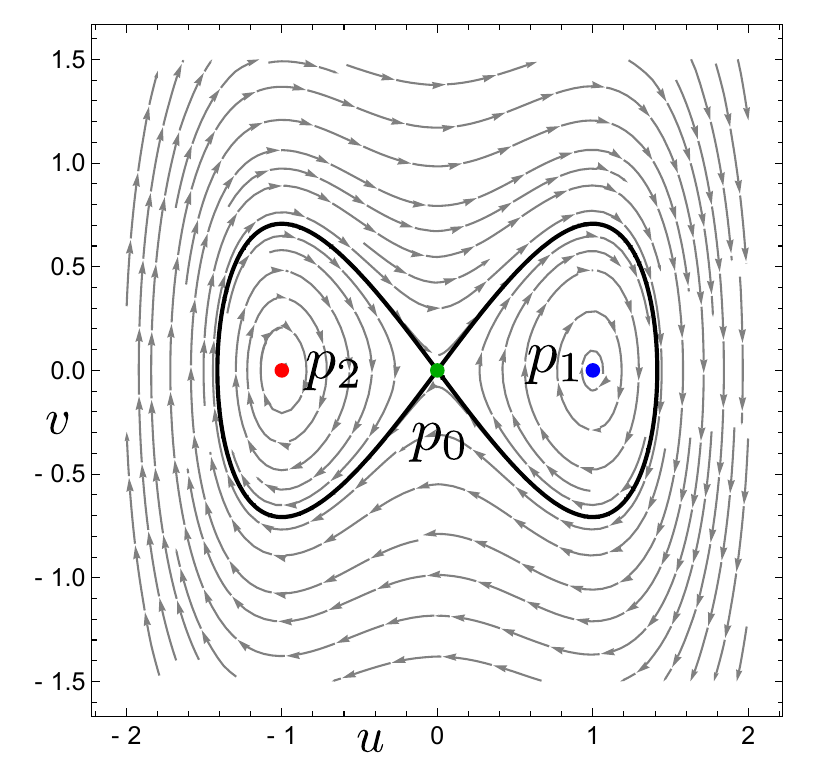}\hspace{0.8cm}}
\subfigure[ \; $\eta=0.1$]{
\includegraphics[scale=0.5]{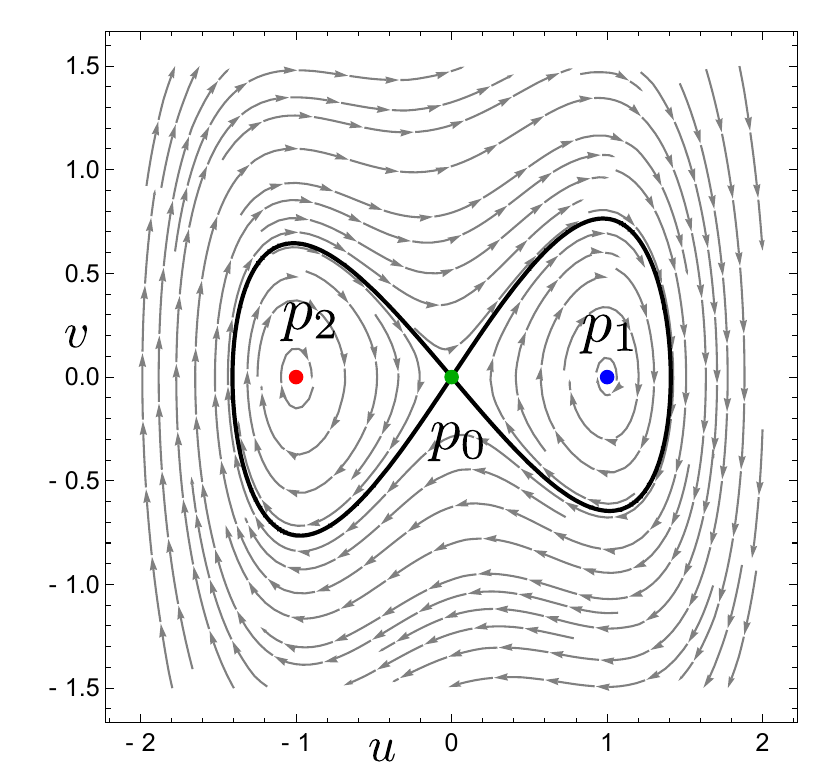}}
\caption{Double-loop homoclinic orbits in the $(u,v)$-plane for parameters $\delta = 0$, $\mu = 1$, and $\lambda = 2.40378$: (a) shows the orbit in the unperturbed Hamiltonian system~\eqref{Ham}, while (b) illustrates its persistence in the perturbed system~\eqref{hamhomo}.}\label{retratosdoble}
\end{figure}

\begin{figure}[h!]
\begin{center} 
\includegraphics[scale=0.5]{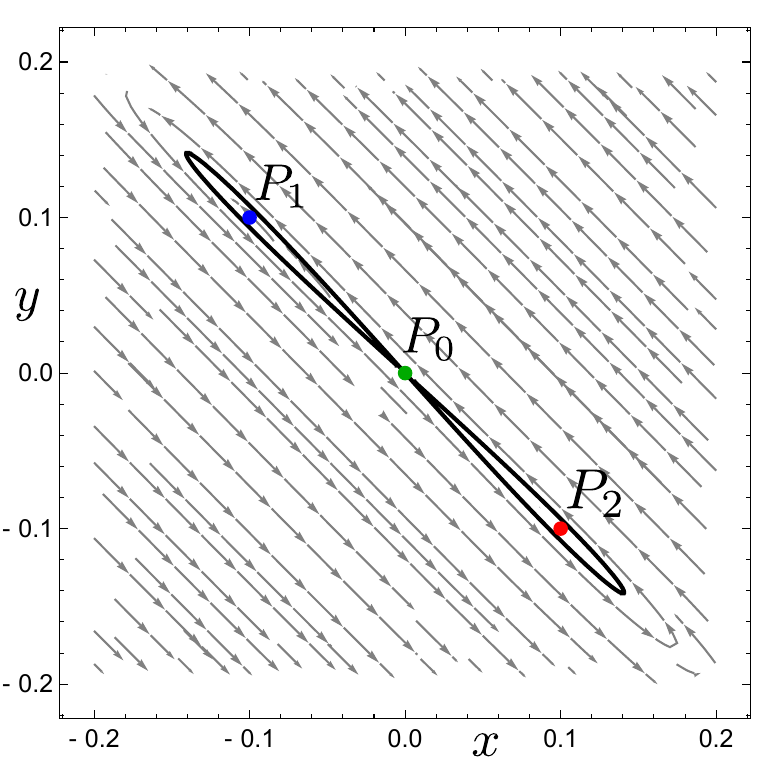} 
\end{center}
\caption{Phase portrait in the $(x, y)$-plane showing a double-loop homoclinic orbit of system~\eqref{MSNcero} for the parameter values $p =  1.01404$, $r = 1.02404$, and $s = 0$.}
\label{homoxydoble}
\end{figure}

In addition to the perturbation of the homoclinic orbits $\Gamma_0^\pm$, we must also consider the persistence of closed level curves of the hamiltonian $H$ lying inside and outside the homoclinic loop.

Let $\gamma^\alpha(\tilde{t}) = (u_\alpha(\tilde{t}), v_\alpha(\tilde{t}))$ denote a periodic orbit lying inside the homoclinic loop, associated with the level set \( H(u_\alpha, v_\alpha) = \alpha \), and let $T_\alpha$ be its period.

Choosing a level curve \( \gamma^\alpha = H^{-1}(\alpha) \), we define the corresponding Melnikov function as
\begin{align*}
M^\alpha(\lambda) &= \int_{0}^{T_{\alpha}} \left(\lambda - 3u_{\alpha}^2(t)\right) v_{\alpha}^2(t)\, d\tilde{t} \\
&= \lambda \int_{\gamma^{\alpha}} v\, du - 3 \int_{\gamma^{\alpha}} u^2 v\, du,
\end{align*}
where we have used $v = du/dt$ to convert the time integral into a line integral along the closed orbit $\gamma^\alpha$. Using the Hamiltonian \eqref{ham1} to express $v$ as a function of $u$, we find that the condition for persistence of $ \gamma^\alpha$ is $M^\alpha(\lambda) = 0$, or 
\begin{equation}
\label{Ralpha}
\lambda = \frac{3 \int_{\gamma^{\alpha}} u^2 v\, du}{\int_{\gamma^{\alpha}} v\, du} = \mathcal{R}(\alpha).
\end{equation}

As shown in \cite{Carr}, the function $\mathcal{R}(\alpha)$  has the qualitative shape illustrated in Figure~\ref{figCarr}: it attains a unique minimum at $\kappa \approx 2.256$, and then increases monotonically as $\alpha \to \infty$. Hence, for $\lambda \in (3, \infty)$, only one closed orbit near a level set persists.
For certain values of $\alpha > 0$, a single closed orbit is preserved. When $\lambda \in (\tfrac{12}{5}, 3)$, three periodic orbits coexist. For $\lambda \in (\kappa, \tfrac{12}{5})$, two of these orbits remain, while one periodic orbit persists throughout the interval $ \alpha > 0$. At the critical value $ \lambda= \tfrac{12}{5}$, a homoclinic saddle connection appears, as previously described, coexisting with a periodic orbit located outside the homoclinic loop. In the range $ \lambda\in (\kappa, \tfrac{12}{5})$, the system exhibits two nested periodic orbits: an inner repelling cycle enclosed by an outer attracting one, both surrounding the three equilibrium points. As $\lambda$ passes through $\kappa$, these two orbits collide and vanish in a saddle-node bifurcation of periodic orbits.

As $s \to 0$, the dashed line in Figure~\ref{centro_org1} collapses onto the pitchfork bifurcation line, and the organizing centers $Q_0$ and $Q_1$ merge into the $\mathbb{Z}_2$-symmetric Bogdanov--Takens point located at $Q = (1, 1)$. 

To facilitate the visualization of the bifurcation structure in parameter space, we fix $\mu = 1$ without loss of generality and observe that $\lambda$ remains positive along both homoclinic branches. In this symmetric limit, the Hopf bifurcation curves coalesce into a single branch, while the homoclinic bifurcation curves converge to a single curve that is tangent to the line $r - 1 = \frac{12}{7}(p - 1)$ at $Q$. Likewise, the saddle--node bifurcation curves of limit cycles collapse into a single curve tangent to the line $r - 1 \approx 1.7962(p - 1)$ at the same point.

Figure \ref{diagramaC} depicts the global bifurcation structure with $\mu = 1$, emphasizing the role of the organizing center and the bifurcation curves emerging from it.

\begin{figure}[hbt]
\begin{center} 
\includegraphics[scale=0.6]{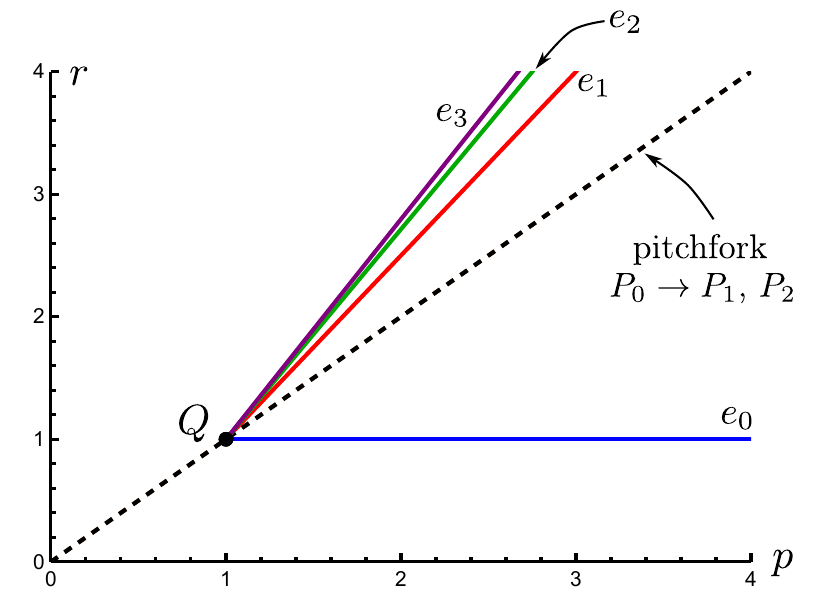} 
\end{center}
\caption{Bifurcation diagram of system~\eqref{MSNcero} for the symmetric case $s=0$. The point $Q=(1,1)$ represents the unique organizing center of the system. The bifurcation curves are as follows: the dashed line corresponds to a pitchfork bifurcation curve along which the equilibria $P_1$ and $P_2$ emerge from the origin; $e_0$ (blue) represents a supercritical Hopf bifurcation at the origin $P_0$, while $e_1$ (red) corresponds to a subcritical Hopf bifurcation at the equilibria $P_1$ and $P_2$. The curve $e_2$ (green) indicates a homoclinic bifurcation associated with the saddle at the origin, and $e_3$ (purple) corresponds to a saddle-node bifurcation curve along which equilibria are created or annihilated near the origin.}\label{diagramaC}
\end{figure}

To provide numerical support for the analytical results shown in Figure~\ref{figCarr}, we present in Figure~\ref{homosuv}  phase portraits of system~\eqref{homo2} in the $(u, v)$-plane for $\mu = 1$ and several representative values of the parameter $\lambda$. These plots illustrate the evolution of the dynamics, including the persistence and breakdown of homoclinic structures. The equilibrium points $p_0$, $p_1$, and $p_2$ are marked in green, blue, and red, respectively. The observed transitions in the phase portraits align with the Melnikov predictions and reflect the bifurcation scenario depicted in Figure~\ref{figCarr}.
\begin{figure}[h!]
\centering
\includegraphics[scale=0.6]{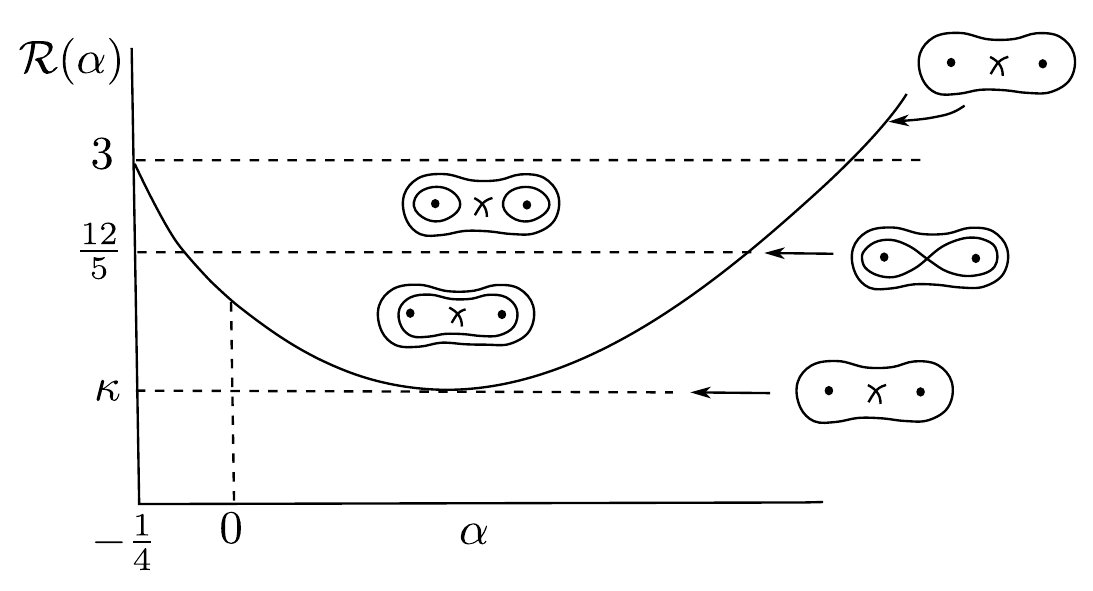}
\caption{The graph of ${\mathcal R}(\alpha)$ together with its associated invariant level curves, where  $\kappa \approx  2.256$.}
\label{figCarr}
\end{figure}
\begin{figure}[!htbp]
\begin{center} 
\subfigure[\; $\lambda = 2$]{
\includegraphics[width=5.5cm]{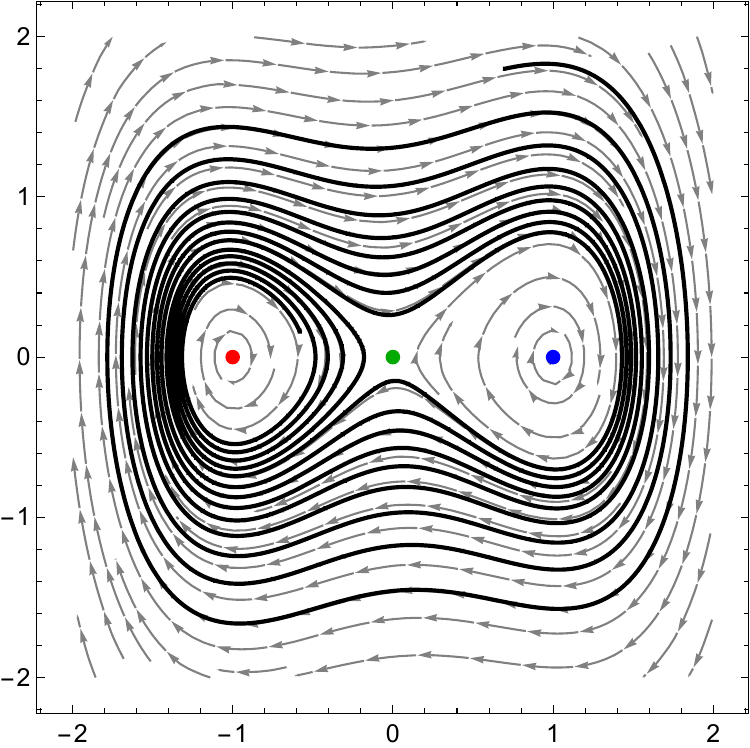}}\hspace{0.3cm}
\hspace{0.4cm}
\subfigure[\; $\lambda = 2.256$]{
\includegraphics[width=5.5cm]{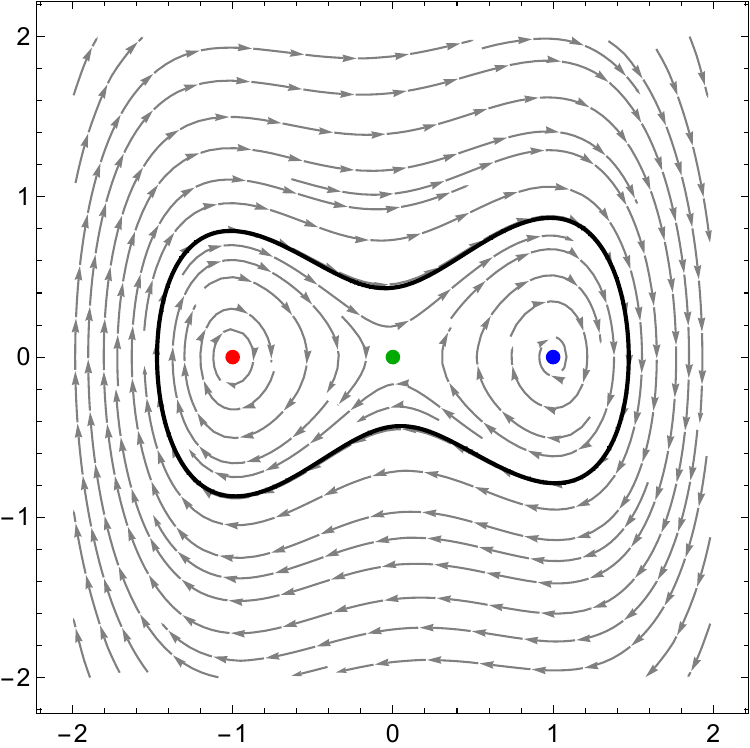}}\hspace{0.3cm}
\hspace{0.4cm}
\subfigure[\; $\lambda=2.3$]{
\includegraphics[width=5.5cm]{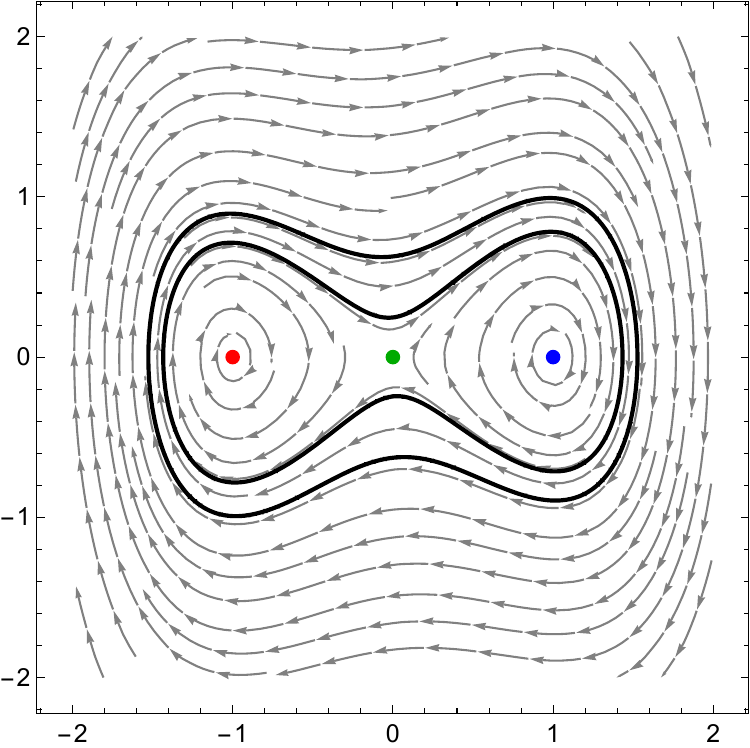}}
\hspace{0.5cm}
\subfigure[\; $\lambda=2.4$]{
\includegraphics[width=5.5cm]{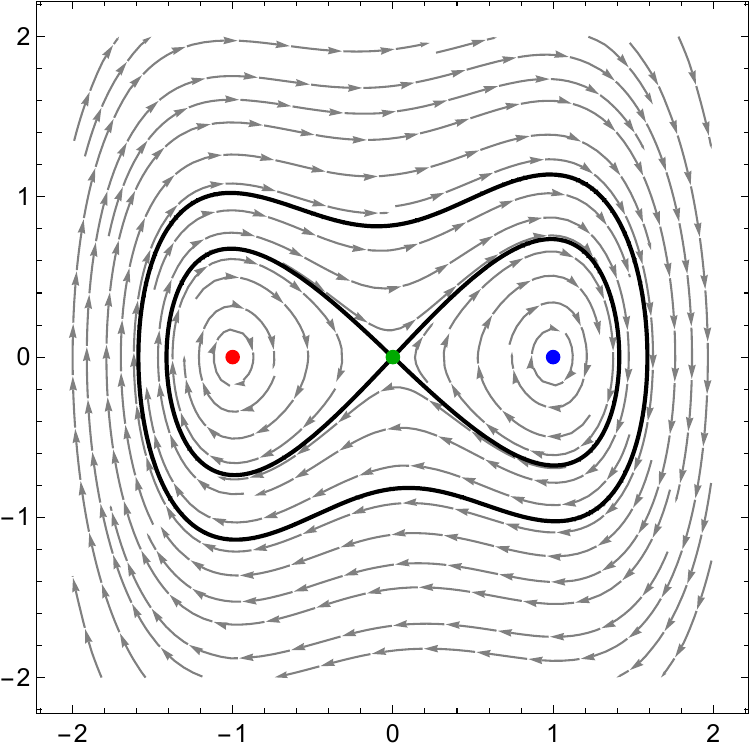}}
\hspace{0.6cm}
\subfigure[\; $\lambda=2.6$]{
\includegraphics[width=5.5cm]{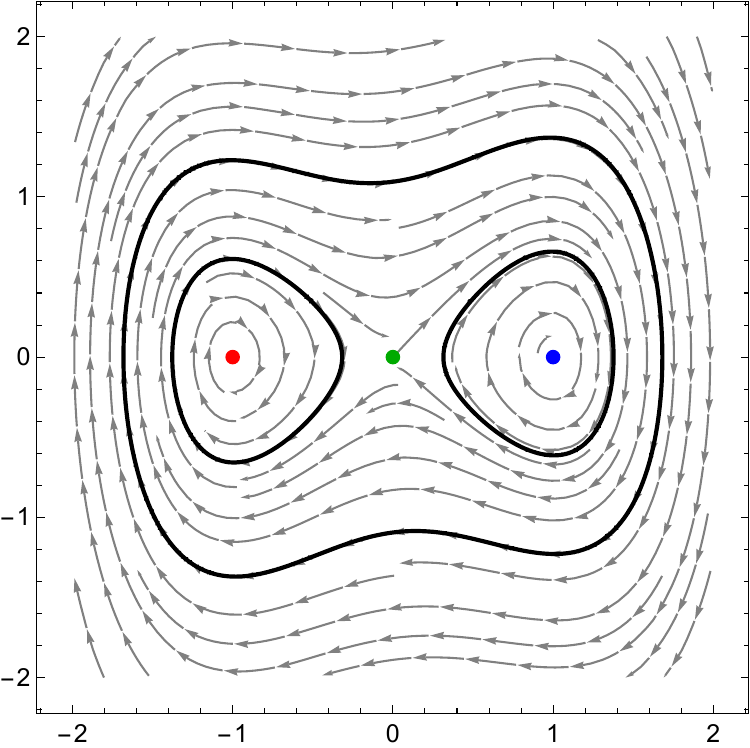}}
\hspace{0.6cm}
\subfigure[\; $\lambda=3.2$]{
\includegraphics[width=6cm]{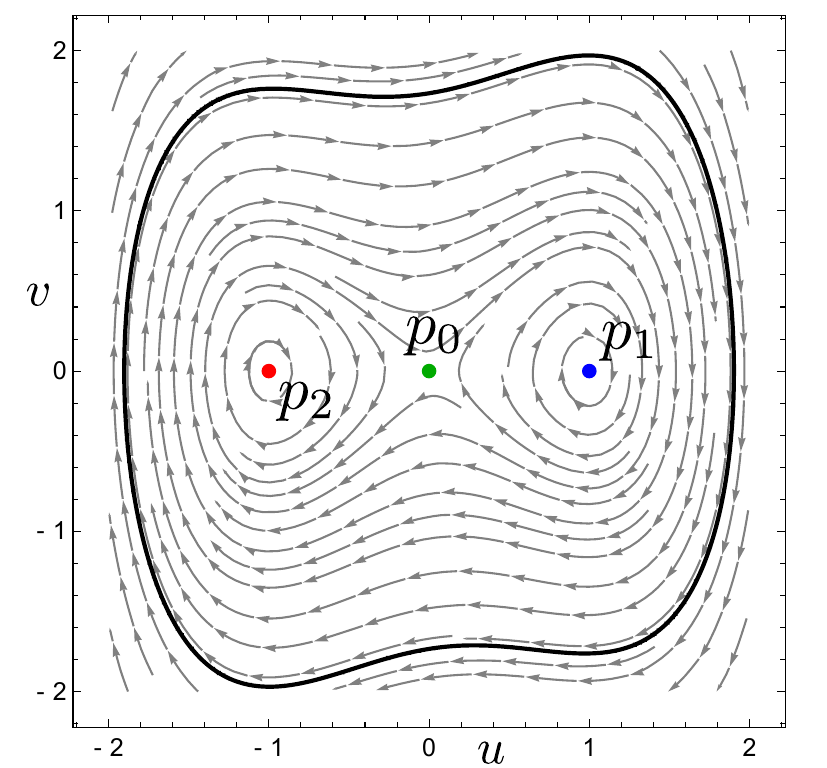}}
\end{center}
\caption{Phase portraits of the Hamiltonian system \eqref{homo2} in the $(u,v)$-plane illustrating different dynamical scenarios for $\mu=1$ and representative values of the parameter $\lambda$. 
In (a) there are no periodic orbits; in (b) there is a single periodic orbit; in (c) two periodic orbits are present; in (d) a homoclinic orbit and an enclosing periodic orbit are observed; in (e) an envelope and a periodic orbit surround each of the equilibrium points located away from the origin; and in (f) only one periodic orbit exists. 
In all panels, from left to right, the equilibrium points are $p_2$ (red), $p_0$ (green), and $p_1$ (blue).}\label{homosuv}
\end{figure}

\section{Concluding Remarks}\label{sec_conclusion}
This work provides a rigorous analytical refinement of the deterministic Saltzman-Maasch
model \eqref{MSN}, formulated within a slow-fast dynamical framework. By incorporating a cubic
nonlinear feedback in the CO$_2$ dynamics, we derived explicit criteria for the occurrence
and criticality of a generalized Hopf bifurcation. While this bifurcation had been
previously detected through numerical continuation by  Nolet \cite{Nolet2017} and
Quinn \cite{Quinn2018}, our analysis establishes it analytically for the first time by computing
the first and second Lyapunov coefficients and identifying the precise transition between
subcritical and supercritical Hopf regimes. This result provides a solid mathematical
foundation for the coexistence of stable and unstable limit cycles in the model.

A second contribution concerns the global organization of the dynamics. Through a
Hamiltonian reduction and the application of Melnikov’s method, we derived explicit
conditions ensuring the persistence of homoclinic orbits under small perturbations. These
global connections between saddle equilibria generate large-amplitude excursions in the
phase space and correspond to abrupt transitions between quasi-steady climate states.
Their analytical characterization confirms that deterministic internal feedbacks are
sufficient to reproduce glacial terminations without invoking stochastic forcing.

The dynamical regimes identified in this work can be related to key features of Pleistocene glacial cycles. In particular, the emergence of stable and unstable limit cycles provides a mechanism for oscillations with different amplitudes and periods, while homoclinic connections are associated with abrupt transitions between climate states. These features are consistent with paleoclimate records, which exhibit variability in both amplitude and duration, as well as rapid glacial--interglacial transitions.

From a broader perspective, several extensions of the present framework are of interest. In particular, the inclusion of stochastic forcing may help explain transitions between climate states that are not captured by the deterministic model alone, as observed in related stochastic versions of the Saltzman--Maasch model (see, e.g., \cite{Alexandrov}). Likewise, incorporating time-dependent orbital forcing could provide a more direct link with Milankovitch cycles, while higher-dimensional extensions may allow for the inclusion of additional feedback mechanisms relevant to long-term climate variability.

Overall, the present analysis unifies local (Hopf and generalized Hopf) and global (homoclinic)
bifurcations within a single geometric framework, showing how nonlinear feedbacks and
slow-fast interactions can organize multiple oscillatory regimes. These results support the
interpretation of Pleistocene climate variability as an emergent property of intrinsic
deterministic dynamics, while also highlighting the potential role of external and stochastic
effects in shaping observed climate transitions.
\section*{Appendices}

\appendix
\section{Proof of Proposition~\ref{prop_bif1}}\label{ap_lyap}
The Lyapunov coefficients are computed using the normal form formulas given in the book by Kuznetsov \cite{Kuznetsov2023}.
\medskip

We start by considering  that the Jacobian matrix of the system  \eqref{MSNcero} evaluated in $r=1$,  which is given by
\begin{equation*}
{\mathcal A}=\begin{pmatrix}
-1& -1\\
p & 1\\
\end{pmatrix},
\end{equation*}
with  characteristic polynomial  $p(\lambda)=\lambda^2+p-1$. 
Therefore, if  $p>1$, the matrix ${\mathcal A}$  has a pair of purely complex
imaginary eigenvalues
$\lambda_{1,2}=\pm i \omega$, where  $\omega = \sqrt{p-1}$.
Now, we verify the transversality condition of the Hopf bifurcation. A simple calculation shows that
$$
\frac{1}{2}\frac{\partial \: {\rm tr} ({\mathcal A})}{\partial r}\Big|_{r=1}  
= \frac{1}{2}\frac{\partial (r-1)}{\partial r}\Big|_{r=1}  = \frac{1}{2}\neq 0.
$$
This confirms that the transversality condition holds. Consequently, system \eqref{MSNcero} undergoes a Hopf bifurcation at the origin when the parameter crosses the threshold value $r = 1$.

To investigate the nature of the Hopf bifurcation and the stability of the periodic orbits it generates, we now compute the first Lyapunov coefficient $l_1$. This coefficient provides a rigorous criterion for determining whether the bifurcation is supercritical or subcritical, and hence whether the emerging limit cycle 
 from the equilibrium point $(0,0)$
is stable or unstable.

The two complex eigenvectors associated with the eigenvalues $\pm i\omega$ are chosen as
$$
{\mathbf q} = \left( \frac{\omega +i}{2\omega} , -\frac{(1+\omega^2)i}{2\omega}\right)^T, \qquad 
{\mathbf p} = \left( 1 , \frac{1-i\omega}{1+\omega^2}\right)^T,
$$
where $\omega=\sqrt{p-1}$. 
They are normalized such that $\langle \mathbf p, \mathbf q \rangle = 1$, and the multilinear terms and normal form coefficients $g_{20}$, $g_{11}$, and $g_{21}$ are computed using the formulas for Hopf bifurcations given in \cite{Kuznetsov2023}. We obtain
\[
g_{20}= \frac{(1+i\sqrt{p-1})ps}{2(p-1)}, 
\quad 
g_{11}= -g_{20}, 
\quad  
g_{21}=-\frac{3p^2(-i+\sqrt{p-1})}{4(p-1)^{3/2}}.
\]
Using the standard expression for the first Lyapunov coefficient, we obtain
\begin{equation}\label{l1}
l_1= -\frac{p(3(p-1)-2s^2)}{8(p-1)^{5/2}}.
\end{equation}
Therefore, $l_1 >0$ when $1<p<1+\tfrac{2}{3}s^2$, while $l_1 <0$ holds for $p>1+\tfrac{2}{3}s^2$.

It is worth noting that $l_1=0$ at $p = 1+ \tfrac{2}{3}s^2$, implying that a generalized Hopf bifurcation (Bautin bifurcation) may occur.
To better understand the bifurcation, we need to calculate the second Lyapunov coefficient $l_2$. 

Notice that for $p = 1+ \tfrac{2}{3}s^2$, the matrix ${\mathcal A}$ has eigenvalues 
 $\lambda_{1,2}= \pm i \sqrt{\tfrac{2}{3}}s$. 
In the meantime, to compute $l_2$, we begin by calculating the normalized eigenvector
 of   ${\mathcal A}$ and the adjoint eigenvector  of ${\mathcal A}^T$, namely
 $${\mathbf q} =  \left( \begin{array}{c}\tfrac{1}{2}+ i\tfrac{\sqrt{6}}{4s}\vspace{0.2cm} \\ -i\tfrac{3+2s^2}{2\sqrt{6}s} \end{array} \right)
  \qquad \text{and}\qquad  
{\mathbf p} = \left( \begin{array}{c}1 \\ \tfrac{3-\sqrt{6}s i}{3+2s^2} \end{array} \right),$$
 respectively.
In addition, we  compute the quantities $g_{ij}$, $2\leq i+j\leq 4$, according to the  formulas  given in \cite{Kuznetsov2023}:
\begin{align*}
 & g_{20} = \frac{(3+i\sqrt{6} s)(3+2s^2)}{12s}, \quad g_{30} = \frac{(2s-i\sqrt{6})(3+2s^2)^2}{16s^3}, \quad
  g_{11} = -g_{20},  \\
 & g_{02} = g_{20}, \quad g_{21} = -g_{30} , \quad g_{12} = -g_{30}, \quad   g_{03} = -g_{30}\quad g_{40}=g_{31}=g_{13}=g_{22}=0.
\end{align*}
Based on these results, we calculate the second Lyapunov coefficient, and it follows easily that
\begin{equation}
\label{lipasegundo}
l_2=-\dfrac{5(3+2s^2)^4}{128\sqrt{6}s^7}, \end{equation}
which is clearly  negative for all $s>0$. Thus, 
the system \eqref{MSNcero} exhibits a supercritical generalized Hopf bifurcation at $(0,0)$ when $p = 1 + \frac{2}{3}s^2$, which makes $(0,0)$ a weakly stable focus.  This implies that two limit cycles bifurcate from
$(0,0)$: an inner, unstable limit cycle, and an outer, stable limit cycle (see \cite{Kuznetsov2023}).

From \eqref{val_cero}, the Jacobian matrix \eqref{matjp} has  conjugate complex eigenvalues 
$$\lambda_{1,2}= \mu(r,p)\pm i \omega(r,p) = \frac{1}{2}(r-1)\pm \frac{1}{2} i \sqrt{4p-(1+r)^2},$$
for all $0 < r \leq 2\sqrt{p}-1$ with $p>\frac{1}{4}$.

Next, we verify the transversality by checking the regularity of the map $\phi: (r,p)\mapsto  (\mu(r,p),l_1(r,p))$  at the point $(r,p)=(1,1+\frac{2}{3}s^2)$. We note that the Jacobian determinant of the map $\phi$
evaluated at $(r,p)=(1,1+\frac{2}{3}s^2)$ is
\begin{align*}
\left|\begin{array}{cc} \frac{\partial\mu(r,p)}{\partial r} &  \frac{\partial\mu(r,p)}{\partial p} \vspace{0.3cm}\\
\frac{\partial l_1(r,p)}{\partial r} &  \frac{\partial l_1(r,p)}{\partial p} \end{array}
 \right| \; _{\mathrel{\Bigg|}_{(r,p)=(1,1+\frac{2}{3}s^2)}}  & =
\frac{1}{2}  \frac{\partial l_1(r,p)}{\partial p} {\Bigg|}_{(r,p)=(1,1+\frac{2}{3}s^2)} \\
& =
\frac{3(p^2+p-2)-2(2+3p)s^2}{16(p-1)^{7/2}}{\Bigg|}_{(r,p)=(1,1+\frac{2}{3}s^2)}\\
& =-\frac{9\sqrt{\frac{2}{3}}(3+2s^2)}{32s^5}\neq 0,
\end{align*}
which means that the transversality condition is satisfied. 
We note that all the conditions for the generalized Hopf bifurcation are satisfied, leading to a codimension-2 generalized Hopf bifurcation at
$(r,p)=(1,1+\frac{2}{3}s^2)$.

\section{Computation of the Melnikov function}\label{ap_melnikov}
This appendix contains the computation of the Melnikov function associated with the homoclinic orbits.
Using \eqref{Hamm}, \eqref{Hamestr}, and the expressions for $\Gamma_0$ given in \eqref{gam1} and \eqref{gam2}, we obtain
\begin{equation*}
\begin{split}
M^\pm(\delta,\mu,\lambda)
&= \int_{-\infty}^{\infty}
\left.\left(\nabla H \cdot
\begin{pmatrix}
0\\
\mu u + \delta u^2 - u^3
\end{pmatrix}
\right)\right|_{\Gamma_0^\pm}
d\tilde{t} \\
&= \int_{-\infty}^{\infty}
\left.
\begin{pmatrix}
-\mu u - \delta u^2 + u^3\\
v
\end{pmatrix}
\cdot
\begin{pmatrix}
0\\
v(\lambda + 2\delta u - 3u^2)
\end{pmatrix}
\right|_{\Gamma_0^\pm}
d\tilde{t} \\
&= \int_{-\infty}^{\infty}
(v_0^\pm(\tilde{t}))^2
\Big(\lambda + 2\delta u_0^\pm(\tilde{t}) - 3(u_0^\pm(\tilde{t}))^2\Big)
\, d\tilde{t} \\
&= \lambda I_0^\pm(\delta,\mu) + 2\delta I_1^\pm(\delta,\mu) - 3 I_2^\pm(\delta,\mu),
\end{split}
\end{equation*}
where
\[
I_0^\pm(\delta,\mu) = \int_{-\infty}^{\infty} (v_0^\pm(\tilde{t}))^2 \, d\tilde{t}, \quad
I_1^\pm(\delta,\mu) = \int_{-\infty}^{\infty} u_0^\pm(\tilde{t}) (v_0^\pm(\tilde{t}))^2 \, d\tilde{t},
\]
\[
I_2^\pm(\delta,\mu) = \int_{-\infty}^{\infty} (u_0^\pm(\tilde{t}))^2 (v_0^\pm(\tilde{t}))^2 \, d\tilde{t}.
\]

\section*{Acknowledgements}
Marco Polo Garc\'{\i}a-Rivera
was partially supported by a SECIHTI Mexico postgraduate fellowship No. 905424.
M. Alvarez-Ram\'{\i}rez is supported by the 2026 Special Program for Teaching and Research Project Funding at CBI-UAMI.

\bibliographystyle{plain}
\bibliography{refmarco_ice2026}
\end{document}